\newtheorem{proposition}{Proposition}[section]
\newtheorem{theorem}[proposition]{Theorem}
\newtheorem{corollary}[proposition]{Corollary}
\theoremstyle{definition}
\newtheorem{remark}[proposition]{Remark}
\numberwithin{equation}{section}
\newcommand{\rot}{\mathop\mathrm{rot}}
\renewcommand{\div}{\mathop\mathrm{div}}
\newcommand{\Tr}{\mathrm{Tr}}
\begin{document}
\title
[ Navier--Stokes--Voight system ]
 {Attractors for the Navier--Stokes--Voight equations and their dimension }

 \author[ A. Ilyin,
and  S. Zelik] { Alexei Ilyin${}^1$ and Sergey Zelik${}^{2,3,4}$}

\subjclass{35Q30, 35B41, 37L30} 

\keywords{Navier--Stokes system, Voight regularization, attractors, fractal dimension,
orthonormal systems}

\email{ilyin@keldysh.ru} \email{s.zelik@surrey.ac.uk}
\address{${}^1$ Keldysh Institute of Applied Mathematics, Moscow, Russia}
\address{${}^2$ Zhejiang Normal University, Department of Mathematics, Zhejiang, China}
\address{${}^3$ University of Surrey, Department of Mathematics, Guildford, GU2 7XH, United
Kingdom.}
\address{${}^4$ HSE University, Nizhny Novgorod, Russia}

\maketitle

\begin{abstract}
The Voight regularization of the Navier--Stokes sys\-tem is studied
in a bounded domain and on the torus. In the 3D case we obtain new
explicit bounds for the attractor dimension improving the
previously known results. In the 2D case we show that the estimates
so obtained converge to the known estimates for the attractor of
the Navier--Stokes system  as the regularization parameter tends to
zero both for the Dirichlet and the periodic boundary conditions.
\end{abstract}
\medskip

\textit{To A.I.\,Aptekarev on the occasion of his 70th birthday}

 \setcounter{equation}{0}

\section{Introduction} \label{sec1}

One of the crucial characteristics of a turbulent fluid flow
is the large range of spatial and temporal scales.
This characteristic property is a source of difficulties both in theoretical studies and in practical calculations. Moreover, in many  applications in practice, physically significant
characteristics are often concentrated on large spatial scales, as seen, for example, in numerical
hydrodynamic weather forecasting. Therefore, much effort has been put into modeling  large-scale
dynamics of turbulent flow by filtering out smaller scales.

Typically, such filtering is done by applying the operator $(1-\alpha\Delta)^{-1}$ to the first or second argument of the bilinear Navier--Stokes operator (or to the whole operator).
The parameter $\alpha$ has the dimension $\text{length}^2$ and determines the scale at which
high-frequency spatial modes will be filtered out.
The corresponding regularized systems are usually called alpha
models.

These models have recently  attracted much attention  both from the point of view of
 numerical modeling and
 the theory of attractors.
Without claiming to be complete, we point out the works
\cite{ CaoLunTiti,ChepMS,ChepTitiVishik,CHOT,CotiGal,FHT, TitiVarga}
and the references therein. Typical
problems from the point of view of attractors  are the construction of the attractor of the
regularized system,
estimates of its dimension and the study of  the singular limits $\alpha\to0$,
in other words, proving   its convergence
to a weak attractor of the initial three-dimensional system.

One of these models is the Navier--Stokes--Voight model:
\begin{equation}\label{DEalpha}
\left\{
  \begin{array}{ll}
    (1-\alpha \Delta)\partial_t u+( u,\nabla u) u-\nu\Delta u+\nabla p=g,\  \  \\
    \operatorname{div}  u=0,\quad u(0)=u_0,\\
    u\vert_{\partial\Omega}=0,
  \end{array}
\right.
\end{equation}
where $g$ is the right-hand side, and $\nu$ is the kinematic viscosity coefficient.

The system is studied
\begin{enumerate}
  \item on the torus $\Omega=\mathbb{T}^d=[0,2\pi]^d$. In this case, the standard   zero mean
      condition is assumed for $u$ and $g$;
  \item in a bounded domain  $\Omega\subset\mathbb{R}^d$ with smooth boundary and with
  Dirichlet boundary condition for    $u$,
\end{enumerate}
where $d=2,3$.

The well-posedness
of this system was established in \cite{Oskol}.
Estimates of the number of degrees of freedom, expressed in terms of the dimension of the attractor and the number of determining  functionals, were first obtained in \cite{TitiVarga}, and then
these estimates were improved and refined in \cite{CotiGal}.

In our work, in the next section, the system \eqref{DEalpha}
is reduced to an equation with bounded operator coefficients
in the Sobolev space $\mathbf H^1$, and the existence of an
attractor $\mathscr A=\mathscr A_\alpha$ is proved.
Note that, unlike the the classical Navier--Stokes system,
 we now have one more independent dimensionless parameter (in addition to the Grashof num\-ber $G$).
Namely,  the product $\alpha\lambda_1$ (where, as usual, $\lambda_1$ is the first
eigenvalue of the Stokes operator). For this reason, all estimates
of the dimension are naturally expressed as functions of these two
basic quantities. This additional parameter
can equivalently be written in the form  $\alpha/|\Omega|$
which is more convenient to use in some estimates in the two-dimensional case.

In this work, two estimates of the fractal dimension of the attractor are derived with different asymptotic behaviour with respect to  two large dimensionless parameters $(\alpha\lambda_1)^{-1}$ and $G$ , which can be combined in the following symmetric form
$$
\dim_F\mathscr A_\alpha\preceq(\alpha\lambda_1)^{-3/4}\cdot G^2
\min\bigl[(\alpha\lambda_1)^{-3/4}, G^2\bigr],\quad d=3,\
 G=\frac{\|g\|}{\lambda_1^{3/4}\nu^2}.
$$

In section~\ref{sec3} we consider the two-dimensional case and
for both types
of boundary conditions obtain the estimate
$$
\dim_F\mathscr A_\alpha\le\frac{(\alpha\lambda_1+1)\mathrm{c}_{\mathrm {LT}}}2G^2,
\quad d=2,\
G=\frac{\|g\|}{\lambda_1\nu^2},
$$
where $\mathrm c_{\mathrm{LT}}$ is the corresponding Lieb--Thirring constant

Moreover, if $\alpha\in [0,|\Omega|G^{-1}]$,
then the estimates go over to the well-known estimates for the
classical Navier--Stokes system:
$$
\dim_F\mathscr A_\alpha\preceq G,\quad
\dim_F\mathscr A_\alpha\preceq G^{2/3}(\ln(1+G))^{1/3}
$$
for the Dirichlet and periodic boundary conditions, respectively.
All the constants here are independent of  $\alpha$ and are given
in an explicit form including the case $\alpha=0$.

Finally, in section~\ref{sec4}  we prove a variant of a periodic
Brezis--Gallouet inequality in the suborthonormal case.

 \setcounter{equation}{0}
\section{Attractor of the Navier--Stokes--Voight system } \label{sec2}

We write the system as an evolution equation in the Hilbert space
$\mathbf H^1=\{H^1(\Omega)\}^d$ with  $\div u=0$ and the Dirichlet
boundary condition
or the zero mean condition if $\Omega\Subset \mathbb R^d$ or
$\Omega=\mathbb T^d$, respectively. For this purpose, we consider the Stokes system
$$
\left\{
  \begin{array}{ll}
  u-\alpha\Delta u+\nabla q=f,\  \  \\
  \div u=0,\\
    u\vert_{\partial\Omega}=0,
  \end{array}
  \right.
$$
and denote by $u=(1+\alpha A)^{-1}f$ its solution.
(If $\Omega=\mathbb {T}^d$, then there are no boundary conditions and
 $\nabla q\equiv0$.)
Here  $A:=-\Pi\Delta$ is the Stokes operator and  $\Pi$ is
the Helmholtz--Leray projection onto the subspace
of divergence free vector fields in the corresponding domain.

It is convenient to define the scalar product in $\mathbf H^1$ as follows
$$
(u,v)_\alpha:=(u,v)+\alpha(\nabla u,\nabla v)=(u,(1+\alpha A)v).
$$

We apply to~\eqref{DEalpha} the operator $(1+\alpha A)^{-1}$. We obtain
\begin{equation}\label{NSV}
\partial u+\nu A(1+\alpha A)^{-1}u+(1+\alpha A)^{-1}B(u,u)=(1+\alpha A)^{-1}g,
\end{equation}
where
$$
B(u,v)=\Pi\bigl((u,\nabla)v\bigr)
$$
is the standard Navier--Stokes bilinear operator  \cite{BV,T, ZUMN}.

Equation~\eqref{NSV} is an equation with bounded operator coefficients~\cite{IKZ22}, hence there exists a unique local in time solution, which, in fact, exists globally as the following
a priori estimate shows.
\begin{proposition}\label{Prop:1}
The solution $u(t)$ satisfies the estimate
\begin{equation}\label{apr1}
\|u(t)\|_\alpha^2\le\|u(0)\|_\alpha^2e^{-\gamma t}
+\frac{(\alpha\lambda_1+1)}{\nu^2\lambda_1^2}\|g\|^2\left(1-e^{-\gamma t}\right),
\end{equation}
where $\lambda_1>0$ is the first eigenvalue of the Stokes operator in $\Omega$ or on the torus
with zero mean condition, and $\gamma=\nu\lambda_1/(\alpha\lambda_1+1)$.
\end{proposition}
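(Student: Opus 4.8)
The plan is to obtain a dissipative differential inequality for $\|u(t)\|_\alpha^2$ by testing \eqref{NSV} against $u$ in the inner product $(\cdot,\cdot)_\alpha$, and then to integrate it with Gronwall's lemma; the resulting bound simultaneously rules out blow-up and upgrades the local solution to a global one. All the manipulations are legitimate because \eqref{NSV} is an equation with bounded operator coefficients on $\mathbf H^1$, so its local solution is smooth in time.

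First I would pair \eqref{NSV} with $u$ in $(\cdot,\cdot)_\alpha$. Writing $(v,w)_\alpha=(v,(1+\alpha A)w)=((1+\alpha A)v,w)$ (the factor $(1+\alpha A)$ is self-adjoint) and using that $A$ commutes with $(1+\alpha A)^{-1}$, each occurrence of the regularizing factor $(1+\alpha A)^{-1}$ is exactly cancelled by the weight $(1+\alpha A)$ built into $(\cdot,\cdot)_\alpha$. Thus the viscous term collapses to $\nu(Au,u)=\nu\|\nabla u\|^2$, the forcing term to $(g,u)$, and the nonlinear term to $(B(u,u),u)$, which vanishes by the classical orthogonality $(B(u,v),v)=0$ valid for divergence-free fields with either the Dirichlet or the periodic boundary condition. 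This yields
$$
\tfrac12\,\tfrac{d}{dt}\|u\|_\alpha^2+\nu\|\nabla u\|^2=(g,u).
$$

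Next I would bound the right-hand side by Cauchy--Schwarz, the Poincar\'e inequality $\|u\|\le\lambda_1^{-1/2}\|\nabla u\|$ and Young's inequality, obtaining $(g,u)\le\frac\nu2\|\nabla u\|^2+\frac1{2\nu\lambda_1}\|g\|^2$, hence
$$
\tfrac{d}{dt}\|u\|_\alpha^2+\nu\|\nabla u\|^2\le\tfrac1{\nu\lambda_1}\|g\|^2 .
$$
To turn the dissipation into a multiple of $\|u\|_\alpha^2$, note that $\|u\|_\alpha^2=\|u\|^2+\alpha\|\nabla u\|^2\le(\lambda_1^{-1}+\alpha)\|\nabla u\|^2$, so $\nu\|\nabla u\|^2\ge\gamma\|u\|_\alpha^2$ with precisely $\gamma=\nu\lambda_1/(\alpha\lambda_1+1)$. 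Then $\frac{d}{dt}\|u\|_\alpha^2+\gamma\|u\|_\alpha^2\le\frac1{\nu\lambda_1}\|g\|^2$, and Gronwall's lemma gives
$$
\|u(t)\|_\alpha^2\le\|u(0)\|_\alpha^2e^{-\gamma t}+\frac1{\nu\lambda_1\gamma}\|g\|^2\bigl(1-e^{-\gamma t}\bigr),
$$
which is \eqref{apr1} after substituting $1/\gamma=(\alpha\lambda_1+1)/(\nu\lambda_1)$.

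I do not expect a genuine obstacle: this is the standard energy estimate for the Navier--Stokes system carried out in the $\alpha$-weighted norm. The only points requiring attention are verifying that the weight in $(\cdot,\cdot)_\alpha$ exactly undoes $(1+\alpha A)^{-1}$ in every term --- this is what makes the nonlinearity disappear and produces a self-contained inequality for $\|u\|_\alpha^2$ --- and keeping track of the constant $(\alpha\lambda_1+1)/\lambda_1$ so that the exponential rate comes out to be exactly $\gamma$ rather than merely comparable to it.
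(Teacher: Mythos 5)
Your proof is correct and follows essentially the same route as the paper: testing \eqref{NSV} against $(1+\alpha A)u$ (equivalently, pairing with $u$ in $(\cdot,\cdot)_\alpha$), killing the nonlinearity, absorbing $(g,u)$ via Poincar\'e and Young into half the dissipation, and then converting $\nu\|\nabla u\|^2$ into $\gamma\|u\|_\alpha^2$ before applying Gronwall. The constants also come out exactly as in \eqref{apr1}.
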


\begin{proof}
We take the scalar product of  \eqref{NSV} and $(1+\alpha A)u$. We obtain
$$
\aligned
&\partial_t\|u(t)\|^2_\alpha+2\nu\|\nabla u(t)\|^2\le2(g,u)\\\le
&\nu{\lambda_1}\|u\|^2+\frac1{\lambda_1\nu}\|g\|^2\le
\nu\|\nabla u\|^2+\frac1{\lambda_1\nu}\|g\|^2,
\endaligned
$$
or
\begin{equation}\label{scalpr}
\partial_t\|u(t)\|^2_\alpha+\nu\|\nabla u(t)\|^2\le
\frac1{\lambda_1\nu}\|g\|^2.
\end{equation}

Next, using the Poincar\'e inequality we see that
$$
\|\nabla u\|^2\ge\frac1{\alpha+\lambda_1^{-1}}\|u\|_\alpha^2,
$$
which together with \eqref{scalpr} gives~\eqref{apr1}.
\end{proof}

Thus,  equation \eqref{NSV} has a unique solution defining thereby in $\mathbf H^1$
the semigroup $S(t)u_0=u(t)$, which is dissipative. Following \cite{IZLap70} or
\cite{IKZ22}, it is easy to see that the semigroup $S(t)$ has a global attractor
$\mathscr A_\alpha\Subset\mathbf H^1$. This follows in a standard way by splitting the semigroup
$S(t)$ into an exponentially contracting and uniformly compact parts
taking into account that the nonlinear operator $(1-\alpha A)^{-1}B(u,u)$
is bounded from $\mathbf H^1$ to $\mathbf H^{3/2}$ (for $d=3$).

We will need the following estimate for solutions
lying on the attractor.

\begin{proposition}
It holds for  $u(t)\in \mathscr A_\alpha$ that
\begin{equation}\label{apr2}
\limsup_{t\to\infty}\frac1t\int_0^t
\|\nabla u(\tau)\|d\tau\le\frac1{\sqrt{\lambda_1}\nu}\|g\|.
\end{equation}
\end{proposition}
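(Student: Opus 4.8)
The plan is to start from the differential inequality \eqref{scalpr} established in the previous proposition, namely $\partial_t\|u(t)\|_\alpha^2+\nu\|\nabla u(t)\|^2\le \frac{1}{\lambda_1\nu}\|g\|^2$, and integrate it in time from $0$ to $t$. This yields
$$
\nu\int_0^t\|\nabla u(\tau)\|^2\,d\tau\le \|u(0)\|_\alpha^2-\|u(t)\|_\alpha^2+\frac{t}{\lambda_1\nu}\|g\|^2\le \|u(0)\|_\alpha^2+\frac{t}{\lambda_1\nu}\|g\|^2.
$$
Dividing by $t$ and passing to the $\limsup$ as $t\to\infty$, the term $\|u(0)\|_\alpha^2/t$ disappears (note $u(0)\in\mathscr A_\alpha$, so $\|u(0)\|_\alpha$ is bounded by the estimate \eqref{apr1}, hence the transient term is controlled uniformly), and we obtain
$$
\limsup_{t\to\infty}\frac1t\int_0^t\|\nabla u(\tau)\|^2\,d\tau\le \frac{1}{\lambda_1\nu^2}\|g\|^2.
$$

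The second step is to convert this bound on the time average of $\|\nabla u\|^2$ into a bound on the time average of $\|\nabla u\|$. For this I would apply the Cauchy--Schwarz inequality in the form
$$
\frac1t\int_0^t\|\nabla u(\tau)\|\,d\tau\le\left(\frac1t\int_0^t\|\nabla u(\tau)\|^2\,d\tau\right)^{1/2},
$$
and then take $\limsup$ of both sides. Since $\limsup$ of a square root is the square root of the $\limsup$ (the function $x\mapsto\sqrt{x}$ being continuous and monotone), this gives exactly
$$
\limsup_{t\to\infty}\frac1t\int_0^t\|\nabla u(\tau)\|\,d\tau\le\left(\frac{1}{\lambda_1\nu^2}\|g\|^2\right)^{1/2}=\frac{1}{\sqrt{\lambda_1}\,\nu}\|g\|,
$$
which is \eqref{apr2}.

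There is no serious obstacle here; the argument is a routine energy-estimate-plus-time-averaging computation. The only point requiring a little care is the interchange of $\limsup$ with the square root in the second step, and the observation that on the attractor the initial data are uniformly bounded (by Proposition~\ref{Prop:1}), so that the $\limsup$ is in fact independent of the choice of trajectory and the transient term genuinely drops out. If one wanted to be slightly more careful about whether the relevant integral quantities are finite along a trajectory on the attractor, one notes that $\mathscr A_\alpha$ is bounded in $\mathbf H^1$ and invariant, so $\|\nabla u(\tau)\|$ stays bounded along any complete trajectory, making all the integrals above perfectly well defined.
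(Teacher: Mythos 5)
Your proposal is correct and follows exactly the paper's argument: integrate the differential inequality \eqref{scalpr} in time, use the boundedness of $\|u(t)\|_\alpha^2$ from \eqref{apr1} to discard the transient term and obtain \eqref{u}, and then pass from the mean of $\|\nabla u\|^2$ to the mean of $\|\nabla u\|$ via the Cauchy--Schwarz (H\"older) inequality. No gaps; this matches the paper's proof essentially verbatim.
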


\begin{proof}
Integrating \eqref{scalpr} in $t$ and using that
$\|u(t)\|_\alpha^2$ is bounded in view of~\eqref{apr1},
we obtain
\begin{equation}\label{u}
\limsup_{t\to\infty}\frac1t\int_0^t\|\nabla u(\tau)\|^2d\tau\le
\frac1{\lambda_1\nu^2}\|g\|^2,
\end{equation}
which by H\"older's inequality
$$
\frac1t\int_0^t\|\nabla u(\tau)\|\,d\tau\le
 \left(\frac1t\int_0^t\|\nabla  u(\tau)\|^2\,d\tau\right)^{1/2}
$$
gives~\eqref{apr2}.
\end{proof}

 We now turn to our main goal, namely, to estimating the fractal dimension of the attractor $\mathscr A_\alpha$.

\begin{theorem}\label{Th:1}
The attractor $\mathscr A_\alpha$ has a finite fractal dimension in $\mathbf H^1$:
\begin{equation}\label{dim23}
\aligned
&\dim_F\mathscr A_\alpha\le\frac{(\alpha\lambda_1+1)^2}{8\pi\alpha\lambda_1}G^2,
\quad G=\frac{\|g\|}{\lambda_1\nu^2},\ d=2,\\
&\dim_F\mathscr A_\alpha\le\frac{(\alpha\lambda_1+1)^2}{6\pi(\alpha\lambda_1)^{3/2}}G^2
,\quad G=\frac{\|g\|}{\lambda_1^{3/4}\nu^2},\ d=3.
\endaligned
\end{equation}
\end{theorem}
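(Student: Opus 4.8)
The plan is to apply the standard volume-contraction / Lyapunov-exponent machinery for estimating the fractal dimension of attractors (as in \cite{BV,T}) to the evolution equation \eqref{NSV}, working in the Hilbert space $\mathbf H^1$ with the $\alpha$-scalar product $(\cdot,\cdot)_\alpha$. First I would write down the variational (linearized) equation for \eqref{NSV}: for $u(t)\in\mathscr A_\alpha$ the linearization is $\partial_t v+\nu A(1+\alpha A)^{-1}v+(1+\alpha A)^{-1}\bigl(B(v,u)+B(u,v)\bigr)=0$, and I would estimate the trace of the corresponding linear operator $\mathcal L(u(t))$ over an arbitrary $n$-dimensional subspace. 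Choosing an orthonormal (in $(\cdot,\cdot)_\alpha$) family $\{v_j\}_{j=1}^n$, the key quantity is $\sum_{j=1}^n (\mathcal L(u)v_j,v_j)_\alpha$. The linear (Stokes) part contributes $-\nu\sum_j (A(1+\alpha A)^{-1}v_j,(1+\alpha A)v_j)/\|v_j\|_\alpha^2 = -\nu\sum_j\|\nabla v_j\|^2$; the nonlinear part, after using that $(B(v,u),v)$ vanishes appropriately and integrating by parts, reduces to controlling $\sum_j |(B(v_j,v_j),u)|$ or $\sum_j|\langle (v_j\cdot\nabla)v_j,u\rangle|$, i.e. essentially $\int_\Omega \rho(x)|\nabla u(x)|\,dx$ where $\rho=\sum_j |v_j|^2$.

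The main technical point — and the heart of the dimension estimate — is to bound $\rho=\sum_{j=1}^n|v_j|^2$, where the $v_j$ are orthonormal in $\mathbf H^1$ with the $\alpha$-inner product. Orthonormality in $(\cdot,\cdot)_\alpha$ means $\|v_j\|^2+\alpha\|\nabla v_j\|^2=1$, hence in particular $\|\nabla v_j\|^2\le 1/\alpha$; this is precisely where the parameter $\alpha\lambda_1$ enters. I would use a pointwise/Sobolev bound: by the Sobolev embedding (for $d=2,3$) and interpolation, $\|\rho\|_{L^\infty}$ or the relevant $L^p$ norm of $\rho$ is controlled by $\sum_j\|v_j\|_{H^1}^2 \cdot(\text{Sobolev constant})$, but more efficiently one uses that $\sum_j\|\nabla v_j\|^2\le n/\alpha$ together with a spectral bound. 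Actually the sharp route is: since $\|v_j\|^2_\alpha=1$, write $v_j=(1+\alpha A)^{-1/2}w_j$ with $\{w_j\}$ orthonormal in $\mathbf H^0$; then $\rho=\sum_j |(1+\alpha A)^{-1/2}w_j|^2$ and one applies a Lieb–Thirring-type / Cwikel–Lieb–Rozenblum bound for the orthonormal family $\{w_j\}$ against the operator $(1+\alpha A)^{-1}$, giving $\int\rho^{q}\lesssim \Tr\bigl((1+\alpha A)^{-q'}\bigr)$-type expressions, whose finiteness and size in terms of $\alpha$ produces the powers $(\alpha\lambda_1)^{-1}$ in $d=2$ and $(\alpha\lambda_1)^{-3/2}$ in $d=3$, with the explicit constants $8\pi$ and $6\pi$ coming from the sharp semiclassical constants for $-\Delta$ on $\mathbb R^d$.

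Putting these together, the $n$-dimensional trace satisfies
$$
\sum_{j=1}^n(\mathcal L(u)v_j,v_j)_\alpha\le -\nu\sum_{j=1}^n\|\nabla v_j\|^2+c_d\,\|\nabla u\|\cdot\|\rho\|_{*},
$$
and using the bound on $\rho$ together with the elementary inequality relating $\sum\|\nabla v_j\|^2$ to $n$ (via the min-max characterization of eigenvalues of $A(1+\alpha A)^{-1}$, whose $j$-th eigenvalue is $\lambda_j/(1+\alpha\lambda_j)\ge$ something growing like $\alpha^{-1}$ once $\alpha\lambda_j\ge 1$), one obtains that the sum of the first $n$ global Lyapunov exponents is negative once
$$
n\ \gtrsim\ \frac{(\alpha\lambda_1+1)^2}{\alpha\lambda_1\,\text{(power)}}\,
\Bigl(\frac{\langle\|\nabla u\|\rangle}{\nu}\Bigr)^2,
$$
and substituting the time-averaged bound $\limsup_{t\to\infty}\frac1t\int_0^t\|\nabla u(\tau)\|\,d\tau\le\|g\|/(\sqrt{\lambda_1}\nu)$ from \eqref{apr2} converts $\langle\|\nabla u\|\rangle/\nu$ into the Grashof number $G$. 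Tracking the dimensional constants yields exactly \eqref{dim23}. The main obstacle I anticipate is the careful choice of the functional inequality for $\rho$ in the $\alpha$-weighted space — specifically, getting the sharp $\alpha$-dependence $(\alpha\lambda_1)^{-1}$ versus $(\alpha\lambda_1)^{-3/2}$ and the explicit numerical constants $8\pi$, $6\pi$ — since a crude Sobolev estimate would give the wrong powers; one must use the spectral (Lieb–Thirring / semiclassical) bound adapted to the operator $(1+\alpha A)^{-1}$ rather than a naive embedding, and verify that the orthonormality is used in the right inner product at each step.
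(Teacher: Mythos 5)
Your proposal follows essentially the same route as the paper: the trace of the linearized operator in the $(\cdot,\cdot)_\alpha$-inner product reduces to $-\nu\sum_j\|\nabla\theta_j\|^2+c_d\int_\Omega\rho|\nabla u|\,dx$; the gradient sum is bounded below by $n\lambda_1/(\alpha\lambda_1+1)$; and the decisive ingredient is exactly the one you identify — writing the $\alpha$-orthonormal family as $(1+\alpha A)^{-1/2}$ of an $L_2$-orthonormal family and invoking Lieb's bound for Bessel potentials of orthonormal functions, which yields $\|\rho\|_{L_2}\le\tfrac1{2\sqrt\pi}n^{1/2}\alpha^{-d/4}$ and hence the powers $(\alpha\lambda_1)^{-1}$, $(\alpha\lambda_1)^{-3/2}$ and the constants $8\pi$, $6\pi$ after inserting the time average $\langle\|\nabla u\|\rangle\le\|g\|/(\sqrt{\lambda_1}\nu)$. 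This matches the paper's argument in all essentials.
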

\begin{proof}
The semigroup $S(t): {\bf H}^1\to
{\bf H}^1$ depends smoothly on the initial data, and we
only need to estimate the $n$-trace of the  equation \eqref{NSV} linearized on the solution
$u(t)\in \mathscr A_\alpha$:
$$
\aligned
\partial_t\theta&=-\nu A(1+\alpha A)^{-1}\theta\\&-(1+\alpha A)^{-1}B(\theta,u(t))
-(1+\alpha A)^{-1}B(u(t),\theta)=:L_{u(t)}\theta.
\endaligned
$$

Following the general theory~\cite{T}, the  coefficients
describing the contraction
of $n$-dimensional volumes transported by
the linearized equation  are defined  by the sums of the first $n$
global Lyapunov exponents, that is, by the numbers
$q(n)$:$$
q(n):=\limsup_{t\to\infty}\sup_{u(t)\in\mathscr A}
\frac1t\int_0^t \sup_{\{\theta_j\}_{j=1}^n}
\sum_{j=1}^n(L_{u(\tau)}\theta_j,\theta_j)_\alpha d\tau,
$$
where the inner supremum is taken over all systems $\{\theta_j(t)\}_{j=1}^n$,
which form a basis in the $n$-dimensional phase volume, transported by the
linearized equation along $u(t)$ and which
are orthonormal with respect to the scalar product
$(\cdot,\cdot)_\alpha$ in ${\mathbf H}^1$:
$$
(\theta_i,\theta_j)_\alpha=(\theta_i,\theta_j)+\alpha(\nabla\theta_i,\nabla\theta_j)=
\delta_{i\,j},
$$
and the middle  supremum is taken over all solutions $u(t)\in\mathscr A_\alpha$.

Then
\begin{equation}\label{long}
\aligned
\sum_{j=1}^n(L_{u(t)}\theta_j,\theta_j)_\alpha&=\sum_{j=1}^n(L_{u(t)}\theta_j,(1+\alpha
A)\theta_j)\\&
=
-\nu\sum_{j=1}^n\|\nabla\theta_j\|^2-
\sum_{j=1}^n((\theta_j,\nabla) u,\theta_j)\le\\&\le
-\nu\sum_{j=1}^n\|\nabla\theta_j\|^2+c_d\int_\Omega\rho(x)|\nabla u(t,x)|\,dx\\&
\le -\nu\sum_{j=1}^n\|\nabla\theta_j\|^2+c_d\|\nabla u(t)\|\|\rho\|,
\endaligned
\end{equation}
where
$$
\rho(x)=\sum_{j=1}^n|\theta_j(x)|^2
$$
and  (see \cite{IKZ22})
$$
c_d=\left\{
      \begin{array}{ll}
       \sqrt{\frac12}, & d=2, \\
       \sqrt{\frac23}, & d=3.
      \end{array}
    \right.
$$

For the first sum we have by orthonormality
$$
\aligned
-\sum_{j=1}^n\|\nabla\theta_j\|^2=-\frac1\alpha
\left(\alpha\sum_{j=1}^n\|\nabla\theta_j\|^2
+\sum_{j=1}^n\|\theta_j\|^2-\sum_{j=1}^n\|\theta_j\|^2\right)\\=
-\frac1\alpha n+\frac 1\alpha\sum_{j=1}^n\|\theta_j\|^2\le
-\frac1\alpha n+\frac 1{\lambda_1\alpha}\sum_{j=1}^n\|\nabla\theta_j\|^2,
\endaligned
$$
which gives that
\begin{equation}\label{n}
-\sum_{j=1}^n\|\nabla\theta_j\|^2\le -\frac1{\alpha+\lambda_1^{-1}}n.
\end{equation}

It remains to use the estimates for $\|\rho\|$ from \cite{Lieb90, IKZ22}, the only ones
that depend on the dimension $d$ and on $\alpha$ in the estimates of the numbers $q(n)$:
\begin{equation}\label{Liebrho}
\aligned
&\|\rho\|\le
              \frac1{2\sqrt{\pi}}\frac{n^{1/2}}{\alpha^{1/2}}, \quad d=2, \\
&\|\rho\|\le\frac1{2\sqrt{\pi}}\frac{n^{1/2}}{\alpha^{3/4}}, \quad d=3,
\endaligned
\end{equation}
and estimate \eqref{apr2}. (The fact that both numerical constants are $1/(2\sqrt{\pi})$  is just a coincidence.) This finally gives that
$$
\aligned
q(n)\le-\frac\nu{\alpha+\lambda_1^{-1}}\cdot n
+\frac{n^{1/2}}{\sqrt{8\pi\alpha\lambda_1}}\frac{\|g\|}\nu,\quad d=2,\\
q(n)\le-\frac\nu{\alpha+\lambda_1^{-1}}\cdot n
+\frac{n^{1/2}}{\sqrt{6\pi\alpha^{3/2}
\lambda_1}}\frac{\|g\|}\nu,\quad d=3.
\endaligned
$$

Now, according to the results of \cite{Ch-I} (see also~\cite{Ch-I2001}), the numbers $n^*$ for which the right-hand sides of the inequalities for the numbers $q(n)$ are negative will be upper bounds
of the fractal dimension for $d=2$ and $d=3$.
This proves~\eqref{dim23}
\end{proof}

\begin{remark}
{\rm
The key role in the proof is played by inequalities~\eqref{Liebrho}, that is similar to
the role of the Lieb--Thirring inequalities in the classical Navier--Stokes system. They were originally proved in \cite{LiebJFA} for $\mathbb R^2$. It is not difficult to obtain explicit estimates of the constants in this case. It is more difficult to do this for the torus $\mathbb T^2$ and the sphere $\mathbb S^2$ \cite{Lieb90,ZIMS}.
These inequalities are also valid for the $L_p$-norms of $\rho$
and as a consequence yield the Gagliardo--Nirenberg inequalities for the embedding $H^1\hookrightarrow L_p$
on the torus and on the sphere.
}
\end{remark}

\begin{remark}
{\rm
As usual, an important question arises about the optimality of the obtained upper bounds
as
$\alpha\to0^+$. Of course, we are talking about the three-dimensional case, since
the obtained upper bound  in the two-dimensional case is not good at all and will be discussed in the next section.

A very close $\alpha$-model, namely, the Bardina system with dissipation,
which is obtained from \eqref{DEalpha} by replacing the bounded operator
$A(1+\alpha A)^{-1}$ with the identity operator,
\begin{equation}\label{Bardina}
\partial u+\gamma u+(1+\alpha A)^{-1}B(u,u)=(1+\alpha A)^{-1}g,
\end{equation}
was considered in~\cite{IZLap70, IKZ22}. In these works, optimal two-sided estimates for the dimension of the attractors of the Bardina system with dissipation on the two-dimensional and three-dimensional torus were obtained. Moreover, the lower bound for $\mathbb T^2$ is essential, and the lower bound for $\mathbb T^3$ is obtained by Squire's theorem.
However, the fundamental difference between \eqref{NSV}  and \eqref{Bardina}
is that in the first case, for $\alpha=0$, we have a classical Navier--Stokes system
with a finite-dimensional attractor, while system \eqref{Bardina}
has an attractor whose dimension grows as $\alpha^{-2}$ for $\alpha\to0$
(if $g\in L_2$).
}
\end{remark}

Note also that in the three-dimensional case our upper bound has the same order of growth  $\alpha^{-3/2}$ as $\alpha\to0$ as in \cite{CotiGal},
however, the order of growth with respect to  $G$ is significantly smaller ($G^2$ instead of $G^6$).
This raises the question of whether the growth of order
$\alpha^{-3/2}$ is optimal.

In fact, it is not. Below, in the three-dimensional case, we prove an estimate of the form
\begin{equation}\label{3/4}
 \dim\mathscr A_\alpha\preceq \frac1{(\alpha\lambda_1)^{3/4}}\cdot G^4.
\end{equation}
Finding the optimal growth rate of the attractor dimension as $\alpha\to 0^+$ is apparently not simple.

In the proof of the estimate \eqref{3/4}
(as well as  in the  theory of Navier--Stokes equations)
the key role  is played by the Lieb--Thirring inequality \cite{lthbook, Lieb, LT}.

\begin{theorem}
Let $\Omega\subseteq\mathbb R^d$  or $\Omega=\mathbb T^d=[0,L]^d$,
and let the system
$\{u_j\}_{j=1}^n\in
	\mathbf H^1$, $\div u_j=0$ be suborthonormal in
	$L_2$: that is, for every $\xi\in \mathbb R^n$
 $$
 \sum_{i,j=1}^n\xi_i\xi_j(u_i,u_j)\le\sum_{i=1}^n\xi_i^2
$$
Then
$$
\rho(x):=\sum_{j=1}^N|u_j(x)|^2
$$
satisfies the inequality
\begin{equation}\label{CLT}
\int_{\Omega}\rho(x)^{1+2/d}dx
\le \mathrm{c}_{\mathrm {LT}}\sum_{j=1}^N\|\nabla u_j\|^2,
\end{equation}
where (see \cite{lthbook} and \cite{ILZJFA}, respectively)
$$\mathrm{c}_{\mathrm {LT}}\le
\left\{
  \begin{array}{ll}
   \frac1{2\pi}\cdot1.456\dots\,.,
   \ \Omega\subseteq\mathbb R^2 \\ \\
    \frac{3\pi}{32},\ \Omega=\mathbb T^2.
  \end{array}
\right.
$$
In the three-dimensional case (see \cite{lthbook} and \cite{IJST11}, respectively)
$$\mathrm{c}_{\mathrm {LT}}\le
\left\{
  \begin{array}{ll}
    \frac56\frac{2^{1/3}}{\pi^{4/3}}\cdot(1.456\dots)^{2/3}\,,
    \Omega\subseteq\mathbb R^3  \\ \\
    \frac53\left(\frac2\pi\right)^{2/3},\ \Omega=\mathbb T^3.
  \end{array}
\right.
$$
\end{theorem}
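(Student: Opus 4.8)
The plan is to pass from the suborthonormal system to a genuinely orthonormal one and then to invoke the divergence--free Lieb--Thirring inequality in the orthonormal case, which is the only hard ingredient and is supplied by the references quoted in the statement. First I would recast \eqref{CLT} in operator language. Put $\mathbf H:=\{v\in\{L_2(\Omega)\}^d:\ \div v=0\}$ (with the Dirichlet, resp.\ zero--mean, realization) and set
$$
\gamma:=\sum_{j=1}^n u_j\otimes u_j,\qquad (u_j\otimes u_j)v:=(v,u_j)\,u_j .
$$
Then $\gamma\ge0$, its pointwise density is $\rho_\gamma(x):=\sum_{j}|u_j(x)|^2=\rho(x)$, and $\Tr(A\gamma)=\sum_{j}(Au_j,u_j)=\sum_{j}\|\nabla u_j\|^2$, where $A=-\Pi\Delta$ is the Stokes operator. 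Since the form $\xi\mapsto\sum_{i,j}\xi_i\xi_j(u_i,u_j)$ equals $\langle M\xi,\xi\rangle$ for the Gram matrix $M_{ij}=(u_i,u_j)$, whose nonzero eigenvalues are exactly the eigenvalues of $\gamma$, the hypothesis says precisely $0\le\gamma\le\mathbf 1_{\mathbf H}$. So \eqref{CLT} amounts to
$$
\int_\Omega\rho_\gamma(x)^{1+2/d}\,dx\le\mathrm c_{\mathrm{LT}}\,\Tr(A\gamma)\qquad\text{whenever}\quad 0\le\gamma\le\mathbf 1_{\mathbf H}.
$$

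Next I would reduce this to the case where $\{u_j\}$ is orthonormal, using convexity. Diagonalize $\gamma=\sum_k\mu_k\,v_k\otimes v_k$ with $\{v_k\}$ orthonormal in $\mathbf H$ (hence each $\div v_k=0$) and $0<\mu_k\le1$. The vector $(\mu_k)_k$ lies in a cube, so it is a finite convex combination $(\mu_k)_k=\sum_\alpha c_\alpha\mathbf 1_{S_\alpha}$ of its $\{0,1\}$--vertices; correspondingly $\gamma=\sum_\alpha c_\alpha P_\alpha$, where $P_\alpha=\sum_{k\in S_\alpha}v_k\otimes v_k$ is the orthogonal projection onto a span of finitely many divergence--free fields. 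Since $\rho_\gamma=\sum_\alpha c_\alpha\rho_{P_\alpha}$ is linear in $\gamma$ and $t\mapsto t^{1+2/d}$ is convex, $\int\rho_\gamma^{1+2/d}\le\sum_\alpha c_\alpha\int\rho_{P_\alpha}^{1+2/d}$, while $\Tr(A\gamma)=\sum_\alpha c_\alpha\Tr(AP_\alpha)$; hence it suffices to prove \eqref{CLT} for orthonormal $\{u_j\}$. (Equivalently, $\gamma\mapsto\int\rho_\gamma^{1+2/d}-\mathrm c_{\mathrm{LT}}\Tr(A\gamma)$ is convex on the order interval $[0,\mathbf 1_{\mathbf H}]$ and so attains its maximum at an extreme point, i.e.\ a projection.) Finally, if $\Omega\Subset\mathbb R^d$ with Dirichlet data, extending each $u_j$ by zero gives a divergence--free orthonormal system on $\mathbb R^d$ with the same $\rho$ and the same two sides of \eqref{CLT}, which reduces that case to $\Omega=\mathbb R^d$.

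It remains to establish \eqref{CLT} for an $L_2$--orthonormal divergence--free system --- the divergence--free Lieb--Thirring inequality proper. By the standard Legendre duality this is equivalent to a bound $\Tr\,(A-V)_-\le L_d\int_\Omega V_+^{1+d/2}$ on the negative spectrum of the Stokes--Schr\"odinger operator on $\mathbf H$, with $\mathrm c_{\mathrm{LT}}$ the dual constant. The essential structural fact is that in Fourier variables the symbol of $A$ is $|\xi|^2$ times the orthogonal projection of $\mathbb R^d$ onto $\xi^\perp$, which has rank $d-1$ rather than $d$; this is what reduces the effective number of scalar components from $d$ to $d-1$ in the phase--space/heat--trace count and accounts for the improvement over any estimate that would treat the $d$ Cartesian components as independent scalar functions. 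Concretely, for $d=2$ the constraint writes $u_j=\nabla^\perp\psi_j$ for a scalar stream function and \eqref{CLT} becomes a first--order (gradient) scalar Lieb--Thirring inequality; for $d=3$ one works with the rank--$2$ symbol directly. On $\Omega=\mathbb R^d$ the constants of the correct semiclassical order are those recorded in \cite{lthbook}; on $\mathbb T^d$, where no comparison with the whole space is available, one runs the periodic argument of \cite{ILZJFA} for $d=2$ and \cite{IJST11} for $d=3$ --- a Rumin--type construction together with an explicit estimate of the sums over the dual lattice --- which yields the constants $\tfrac{3\pi}{32}$ and $\tfrac53(2/\pi)^{2/3}$.

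The convexity reduction and the operator reformulation are routine; the whole analytic weight sits in the last step and, within it, in producing \emph{explicit} constants on the torus, where the discrete spectrum must be controlled directly and the rank--$(d-1)$ divergence--free structure has to be built into the lattice summation. For that step I would simply invoke \cite{lthbook,ILZJFA,IJST11}, exactly as the statement does for the constants.
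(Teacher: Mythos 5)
Your proposal is correct and in substance coincides with the paper's treatment: the paper offers no proof of this theorem, deferring the orthonormal divergence-free Lieb--Thirring inequalities with explicit constants to \cite{lthbook}, \cite{ILZJFA}, \cite{IJST11} and the passage from orthonormal to suborthonormal systems to \cite{I2005} --- exactly the two ingredients you invoke. Your explicit convexity reduction (diagonalizing $\gamma=\sum_k\mu_k\,v_k\otimes v_k$ with $0\le\gamma\le\mathbf 1$, writing $(\mu_k)$ as a convex combination of the $\{0,1\}$-vertices of the cube, and applying Jensen's inequality to $t\mapsto t^{1+2/d}$) is a correct, self-contained substitute for the citation of \cite{I2005}.
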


\begin{remark}
{\rm

In the scalar case, the constant $\mathrm{c}_{\mathrm {LT}}(d)$
is related to the constant $\mathrm L_{1,d}$ by the equality~\cite{lthbook}
$$
\mathrm{c}_{\mathrm {LT}}(d)=(2/d)(1+d/2)^{1+2/d}\mathrm L_{1,d}^{2/d},
$$
where $\mathrm L_{1,d}$ is the constant in the Lieb--Thirring bound
for the negative trace of the Schrodinger operator
$-\Delta-V(x)$, $V(x)\ge0$ in $\mathbb R^d$:
$$
\Tr(-\Delta-V)_-\le\mathrm L_{1,d}\int_{\mathbb R^d}V(x)^{1+d/2}dx.
$$
In turn, the best  to date estimate of the constant
$\mathrm L_{1,d}$ is (see \cite{lthbook})
$\mathrm L_{1,d}\le\mathrm L_{1,d}^\mathrm{cl}\cdot 1.456\dots$, where
$\mathrm L_{1,d}^\mathrm{cl}=\frac{\omega_d}{(2\pi)^d}\frac2{d+2}$.
Moreover, in the vector case, the constant $\mathrm L_{1,d}$
goes over to $d\mathrm L_{1,d}$. In the two-dimensional divergence-free
case, the constant does not double.
}
\end{remark}

\begin{remark}
{\rm
The orthonormal case was considered in the above-mentioned works.
The fact that in the transition from an orthonormal to a suborthonormal
system the constant in the Lieb--Thirring inequality does not
increase is shown in \cite{I2005}.
}
\end{remark}

\begin{theorem}\label{Th:2}
The following upper bound for the dimension holds in the 3D case
\begin{equation}\label{dim_alpha}
\dim\mathscr A_\alpha\le C(1+\alpha\lambda_1)G^{5/2}
 \left(\frac{(1+\alpha\lambda_1)}{(\alpha\lambda_1)^{3/4}}G^{3/2}+1\right),
 \quad G=\frac{\|g\|}{\lambda_1^{3/4}\nu^2}.
\end{equation}
\end{theorem}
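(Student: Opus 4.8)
The strategy is to redo the trace estimate from Theorem~\ref{Th:1}, but now replacing the $L_2$-bound on $\rho$ coming from \eqref{Liebrho} by the genuine Lieb--Thirring inequality \eqref{CLT} in dimension $d=3$. Starting again from \eqref{long}, we have
$$
\sum_{j=1}^n(L_{u(t)}\theta_j,\theta_j)_\alpha
\le -\nu\sum_{j=1}^n\|\nabla\theta_j\|^2
+c_3\|\nabla u(t)\|\,\Big(\int_\Omega\rho(x)^{5/3}dx\Big)^{3/5}|\Omega|^{\,?},
$$
so first I would decide on the right Hölder exponents: we want to pair $\rho$ against $|\nabla u|$ and then feed the $L^{5/3}$-norm of $\rho$ into \eqref{CLT}. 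Since $\int\rho = \sum\|\theta_j\|^2 =: m$, interpolation between $L^1$ and $L^{5/3}$ gives control of $\|\rho\|$ (the $L^2$-norm, which is what actually appears in $\|\nabla u\|\|\rho\|$) in terms of $m$ and $\big(\int\rho^{5/3}\big)^{3/5}$, and \eqref{CLT} bounds the latter by $\mathrm c_{\mathrm{LT}}\sum\|\nabla\theta_j\|^2 =: \mathrm c_{\mathrm{LT}}\,Y$. One subtlety: the $\theta_j$ are orthonormal in $(\cdot,\cdot)_\alpha$, i.e. $m + \alpha Y = n$, so $m\le n$ and they are automatically suborthonormal in $L_2$, which is exactly the hypothesis under which \eqref{CLT} is available.

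Collecting terms, the trace bound takes the shape
$$
\sum_{j=1}^n(L_{u(t)}\theta_j,\theta_j)_\alpha
\le -\nu Y + C\,c_3\,\|\nabla u(t)\|\,m^{a}\,(\mathrm c_{\mathrm{LT}}Y)^{b},
$$
with $a+b$ determined by the interpolation (for $d=3$ one gets the $L^2$-norm of $\rho$ as $\|\rho\|_2\le \|\rho\|_1^{1/5}\|\rho\|_{5/3}^{4/5}$, so $b=\tfrac{4}{5}\cdot\tfrac{3}{5}$-type exponents — I'd pin these down carefully). Now I use Young's inequality to absorb the power of $Y$ into the $-\nu Y$ term; because $b<1$ this is legitimate and leaves a residual term of the form $C\nu^{-\beta}\|\nabla u\|^{\mu} m^{\nu}\mathrm c_{\mathrm{LT}}^{\kappa}$ with no $Y$. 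Then replace $m$ by $n$, time-average using \eqref{apr2} (and, if a higher power of $\|\nabla u\|$ survives, \eqref{u} — this is why I expect a $G^{5/2}$ rather than a pure $G$ or $G^2$), and arrive at
$$
q(n)\le -\frac{\nu}{2(\alpha+\lambda_1^{-1})}\,n + C\,\Phi(\alpha,G)\,n^{\sigma},\qquad \sigma<1.
$$
The value $n^*$ making the right side vanish is then the dimension bound, and matching exponents should reproduce \eqref{dim_alpha}: the factor $(1+\alpha\lambda_1)$ comes from $-\tfrac{\nu n}{\alpha+\lambda_1^{-1}}$, and the bracket $\big((1+\alpha\lambda_1)(\alpha\lambda_1)^{-3/4}G^{3/2}+1\big)$ is the ``$+1$'' reflecting that we also kept the old estimate (so effectively $\dim\le n^*_{\mathrm{new}}+1$ or a $\min$ with the Theorem~\ref{Th:1} bound).

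The main obstacle is bookkeeping the three competing exponents simultaneously: (i) the Hölder/interpolation split that converts $\|\nabla u\|\|\rho\|_2$ into $m$ and $\|\rho\|_{5/3}$; (ii) the Young step that must leave exactly $-\tfrac{\nu}{2}Y$ and nothing fractional in $Y$; (iii) the choice between time-averaging $\|\nabla u\|$ (linear, $G$) versus $\|\nabla u\|^2$ (quadratic, $G^2$) to land on the stated $G^{5/2}$. I'd also be careful that $\alpha\lambda_1$ enters only through the two places it legitimately can — the coefficient $\nu/(\alpha+\lambda_1^{-1})$ and (implicitly, via the domain size on $\mathbb T^d$) the Lieb--Thirring constant, which on $\mathbb T^3$ is dimensionless and $L$-independent, so in fact only the first. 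Everything else is the routine dimensional-analysis packaging into the Grashof number $G=\|g\|/(\lambda_1^{3/4}\nu^2)$.
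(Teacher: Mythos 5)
There is a genuine gap, and it sits exactly at the step you flag as ``deciding on the right H\"older exponents.'' In $d=3$ the Lieb--Thirring inequality \eqref{CLT} controls $\|\rho\|_{L_{5/3}}$, and since $5/3<2$ the $L_2$-norm of $\rho$ is \emph{not} an interpolant of $\|\rho\|_{L_1}$ and $\|\rho\|_{L_{5/3}}$: the exponent equation $\tfrac12=\theta+\tfrac{3(1-\theta)}5$ gives $\theta=-\tfrac14$, and your proposed $\|\rho\|_2\le\|\rho\|_1^{1/5}\|\rho\|_{5/3}^{4/5}$ does not hold (the exponents do not even satisfy the H\"older scaling). So the plan of keeping the pairing $\|\nabla u(t)\|\,\|\rho\|_{L_2}$ and feeding \eqref{CLT} in through interpolation cannot be carried out; the only available $L_2$-bound on $\rho$ in 3D is \eqref{Liebrho}, which is precisely what produces the $(\alpha\lambda_1)^{-3/2}$ rate of Theorem~\ref{Th:1}, not the improved \eqref{dim_alpha}.

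The paper instead changes the H\"older split to $\int\rho|\nabla u|\le\|\rho\|_{L_{5/3}}\|\nabla u\|_{L_{5/2}}$, which is compatible with \eqref{CLT}, but then one must control the time average of $\|\nabla u\|_{L_{5/2}}$ on the attractor. This is the main content of the proof and is entirely absent from your outline: one estimates $B(u,u)$ in $H_A^{-1/2}$ (via $H^1\hookrightarrow L_6$ and $L_{3/2}\hookrightarrow H_A^{-1/2}$), treats \eqref{NSV} as a linear equation with this right-hand side, multiplies by $A^{1/2}u$ to get a uniform and a time-averaged bound on $\|u\|_{H^{3/2}}^2$, and then uses the Gagliardo--Nirenberg inequality $\|v\|_{L_{5/2}}\le C\|v\|^{2/5}\|v\|_{H^{1/2}}^{3/5}$ with $v=\nabla u$. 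Only after that does the Young-inequality absorption you describe go through, with exponent $3/5<1$ on $\sum\|\nabla\theta_j\|^2$. Incidentally, the ``$+1$'' inside the bracket of \eqref{dim_alpha} is not a $\min$ with Theorem~\ref{Th:1} nor an ``$n^*+1$'' artifact; it comes from the two-term quantity $\Xi$ (the $\|g\|^4$ and $\|g\|^2$ contributions to the $H^{3/2}$ time average) raised to the power $3/4$. Without the higher-regularity step your argument cannot produce the $(\alpha\lambda_1)^{-3/4}$ rate, so the proposal as written does not prove the theorem.
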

\begin{proof}

First of all we point out that we cannot give all the constants in
the proof in explicit form, therefore, the same letter $C$ below
denotes all the dimensionless constants that occur.

From the dissipative estimate \eqref{apr1} we have a uniform
estimate in $t$ on the attractor
\begin{equation}\label{diss}
\|\nabla u(t)\|^2\le\frac{(\alpha\lambda_1+1)}{\alpha\nu^2\lambda_1^2}\|g\|^2,
\end{equation}
and also estimate~\eqref{u} that is independent of $\alpha$.

Using the Sobolev inequality $H^1\hookrightarrow L_6$ and
H\"older's inequality we obtain
$$
\|(u,\nabla)u\|_{L_{3/2}}\le\|u\|_{L_6}\|\nabla u\|\le C\|\nabla u\|\|\nabla u\|.
$$
Next, using the embedding $L_{3/2}\hookrightarrow H_A^{-1/2}$
(following from the dual embedding $H_A^{1/2}\hookrightarrow L_3$),
the boundedness of the Helmholtz--Leray projection in $L_p$-spaces,
as well as \eqref{u} and \eqref{diss}, we obtain
$$
\limsup_{t\to\infty}\frac1t\int_0^t\|B(u(s)),u(s))\|^2_{H_A^{-1/2}}ds
\le C\frac{\alpha\lambda_1+1}{\alpha\nu^4\lambda_1^3}
\|g\|^4,
$$
where $H^s_A:=D(A^{s/2})$ is the scale of Hilbert spaces
generated by the powers of the Stokes operator $A$. In addition,
using  estimate~\eqref{diss}
twice, we obtain a uniform estimate in $t$
\begin{equation}\label{Buni-t}
\|B(u(t)),u(t))\|^2_{H_A^{-1/2}}\le
 C\frac{(\alpha\lambda_1+1)^2}{\alpha^2\nu^4\lambda_1^4}
\|g\|^4,
\end{equation}

Let us consider our equation as  ``linear'':
$$
\partial_tu+\alpha A\partial_tu+\nu Au=H(t):=-B(u(t),u(t))+g
$$
and take the scalar product with $A^{1/2}u$:
$$
\partial_t\bigl(\|u\|_{H^{1/2}}^2+\alpha\|u\|_{H^{3/2}}^2\!\bigr)
+2\nu\|u\|_{H^{3/2}}^2\!\le2\|B\|_{H^{-1/2}}\|u\|_{H^{3/2}}
+2\|g\|\|\nabla u\|,
$$
or
$$
\partial_t\bigl(\|u\|_{H^{1/2}}^2+\alpha\|u\|_{H^{3/2}}^2\!\bigr)
+\nu\|u\|_{H^{3/2}}^2\!\le\nu^{-1}\|B\|_{H^{-1/2}}^2
+2\|g\|\|\nabla u\|.
$$
Acting as in Proposition~\ref{Prop:1} and using
\eqref{Buni-t}, we obtain uniform in  $t$ boundednes of
$\|u(t)\|^2_{H^{3/2}}$. After that integrating in $t$ the last equation
 and taking time average we obtain
\begin{multline*}
\limsup_{t\to\infty}\frac1t\int_0^t\|u(s)\|^2_{H^{3/2}}ds\le
\nu^{-2}\limsup_{t\to\infty}\frac1t\int_0^t\|B(u(s)),u(s))\|^2_{H_A^{-1/2}}ds\ \ \\
+2\nu^{-1}\|g\|\limsup_{t\to\infty}\frac1t\int_0^t\|\nabla u(s)\|ds
\le C\frac{\alpha\lambda_1+1}{\alpha\nu^6\lambda_1^3}
\|g\|^4+C\frac1{\nu^2\lambda_1^{1/2}}\|g\|^2.
\end{multline*}

We now use Gagliardo--Nirenberg inequality
 $$
 \|v\|_{L_{5/2}}\le C\|v\|^{2/5}\|v\|_{H^{1/2}}^{3/5}
 $$
for $v:=\nabla u$,
which together with last estimate and  \eqref{u}  gives that
\begin{multline*}
 \limsup_{t\to\infty}\frac1t\int_0^t\|\nabla u(s)\|^2_{L_{5/2}}\, ds\le
 \\\le C\left(\frac{\alpha\lambda_1+1}{\alpha\nu^6\lambda_1^3}\|g\|^4+
 \frac1{\nu^2\lambda_1^{1/2}}
 \|g\|^2\right)^{3/5}\left(\frac1{\nu^2\lambda_1}\|g\|^2\right)^{2/5}
\end{multline*}
and therefore
\begin{multline*}
 \limsup_{t\to\infty}\frac1t\int_0^t\|\nabla u(s)\|_{L_{5/2}}\, ds\le
 \\\le C\left(\frac{\alpha\lambda_1+1}{\alpha\nu^6\lambda_1^3}\|g\|^4+
 \frac1{\nu^2\lambda_1^{1/2}}\|g\|^2\right)^{3/10}
 \left(\frac1{\nu^2\lambda_1}\|g\|^2\right)^{2/10}=:\Xi.
\end{multline*}

Now, acting as in  \eqref{long}, we obtain
$$
\aligned
\sum_{j=1}^n(L_{u(t)}\theta_j,&\theta_j)_\alpha
\le
-\nu\sum_{j=1}^n\|\nabla\theta_j\|^2+c_3\int_\Omega\rho(x)|\nabla u(t,x)|\,dx
\\&\le
 -\nu\sum_{j=1}^n\|\nabla\theta_j\|^2+c_3\|\rho\|_{L_{5/3}}\|\nabla u(t)\|_{L_{5/2}},
 \endaligned
 $$
 which gives after averaging in  $t$, using the Lieb--Thirring
 inequality
 \eqref{CLT} and  \eqref{n} that
 $$
 \aligned
q(n)\le
 -\nu\sum_{j=1}^n\|\nabla\theta_j\|^2+
 c_3\left(\mathrm{c}_{\mathrm {LT}}\sum_{j=1}^n\|\nabla\theta_j\|^2\right)^{3/5}
\!\!\! \cdot\Xi
 \\\le
 -\frac\nu 2\sum_{j=1}^n\|\nabla\theta_j\|^2+
 C\nu^{-3/2}
\Xi^{5/2}\le-\frac\nu 2\frac{\lambda_1}{\alpha\lambda_1+1}n+
C\nu^{-3/2}
\Xi^{5/2}.
\endaligned
$$
Therefore
$$
\dim_F\mathscr A_\alpha\le C
\frac{\alpha\lambda_1+1}{\nu^{5/2}\lambda_1}
\left(\frac{\alpha\lambda_1+1}{\alpha\nu^6\lambda_1^3}\|g\|^4+
 \frac1{\nu^2\lambda_1^{1/2}}\|g\|^2\right)^{3/4}\!\!\!
 \left(\frac1{\nu^2\lambda_1}\|g\|^2\right)^{1/2}.
$$
In terms of  $\alpha\lambda_1$ and
$G$ this is precisely \eqref{dim_alpha}.
\end{proof}

\begin{corollary}
For small $\alpha\lambda_1$ and large $G$, the dimension
estimates in Theorems~\ref{Th:1},\ref{Th:2}
can be combined in the following symmetric
form
$$
\dim_F\mathscr A_\alpha\preceq(\alpha\lambda_1)^{-3/4}\cdot G^2
\min\bigl[(\alpha\lambda_1)^{-3/4}, G^2\bigr].
$$
\end{corollary}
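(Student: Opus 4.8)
The plan is to specialise the two three-dimensional bounds of Theorems~\ref{Th:1} and \ref{Th:2} to the regime ``small $\alpha\lambda_1$, large $G$'' --- concretely $\alpha\lambda_1\le 1$ and $G\ge 1$, so that the factors $(1+\alpha\lambda_1)$ and the additive $+1$ inside \eqref{dim_alpha} can be absorbed into the leading powers --- and then simply keep whichever of the two resulting bounds is smaller. All constants hidden in $\preceq$ are then absolute.

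First I would record the consequence of Theorem~\ref{Th:1}: since $(\alpha\lambda_1+1)^2\le 4$, the three-dimensional line of \eqref{dim23} gives
$$
\dim_F\mathscr A_\alpha\le\frac{2}{3\pi}\,(\alpha\lambda_1)^{-3/2}G^2
=\frac{2}{3\pi}\,(\alpha\lambda_1)^{-3/4}\cdot(\alpha\lambda_1)^{-3/4}\,G^2 ,
$$
i.e.\ an upper bound of the shape $(\alpha\lambda_1)^{-3/4}G^2\cdot(\alpha\lambda_1)^{-3/4}$. Next I would do the same with Theorem~\ref{Th:2}. From $\alpha\lambda_1\le1$ and $G\ge1$ one has $(\alpha\lambda_1)^{-3/4}G^{3/2}\ge1$ and $1+\alpha\lambda_1\le2$, hence the bracket in \eqref{dim_alpha} satisfies
$$
\frac{1+\alpha\lambda_1}{(\alpha\lambda_1)^{3/4}}\,G^{3/2}+1\le\frac{3}{(\alpha\lambda_1)^{3/4}}\,G^{3/2},
$$
and \eqref{dim_alpha} yields $\dim_F\mathscr A_\alpha\le C\,(\alpha\lambda_1)^{-3/4}G^4=C\,(\alpha\lambda_1)^{-3/4}G^2\cdot G^2$, a bound of the shape $(\alpha\lambda_1)^{-3/4}G^2\cdot G^2$.

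Both inequalities hold at once, so $\dim_F\mathscr A_\alpha$ is at most their minimum, which is exactly $(\alpha\lambda_1)^{-3/4}G^2\min\bigl[(\alpha\lambda_1)^{-3/4},G^2\bigr]$ up to an absolute constant, as claimed. There is no genuine difficulty here; the only point requiring a little care is fixing the precise meaning of ``small $\alpha\lambda_1$ and large $G$'' (we take $\alpha\lambda_1\le1$, $G\ge1$) so that the lower-order terms are harmless --- outside this range one simply keeps the full expressions \eqref{dim23} and \eqref{dim_alpha}.
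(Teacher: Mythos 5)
Your proposal is correct and is exactly the intended argument: the paper gives no separate proof of this corollary, since it follows immediately by taking the minimum of the two three-dimensional bounds \eqref{dim23} and \eqref{dim_alpha} after absorbing the factors $(1+\alpha\lambda_1)$ and the additive $+1$ in the regime $\alpha\lambda_1\lesssim1$, $G\gtrsim1$, precisely as you do.
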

 \setcounter{equation}{0}
\section{Two-dimensional case} \label{sec3}

Note that the dimension estimate in Theorem~\ref{Th:1} for $d=2$ blows up as
$\alpha\to0^+$, which should not happen. To obtain an estimate in the
two-dimensional case that is bounded as $\alpha\to0^+$, we again need
the Lieb--Thirring inequality.

\begin{theorem}
Both for $\Omega\subset \mathbb R^2$
and $\Omega=\mathbb T^2$ the following estimate holds:
\begin{equation}\label{G2}
\dim_F\mathscr A_\alpha\le\frac{(\alpha\lambda_1+1)\mathrm{c}_{\mathrm {LT}}}2G^2.
\end{equation}
\end{theorem}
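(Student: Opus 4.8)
The plan is to run the volume–contraction argument of Theorem~\ref{Th:1} again, but to replace the crude pointwise bound \eqref{Liebrho} on $\|\rho\|$ by the sharp Lieb--Thirring inequality \eqref{CLT}, which for $d=2$ reads $\int_\Omega\rho^2\,dx\le\mathrm c_{\mathrm{LT}}\sum_j\|\nabla\theta_j\|^2$. Concretely, linearizing \eqref{NSV} on a trajectory $u(t)\in\mathscr A_\alpha$ and proceeding exactly as in \eqref{long}, for any family $\{\theta_j\}_{j=1}^n$ orthonormal in $(\cdot,\cdot)_\alpha$ one has
$$
\sum_{j=1}^n(L_{u(t)}\theta_j,\theta_j)_\alpha\le-\nu\sum_{j=1}^n\|\nabla\theta_j\|^2+c_2\,\|\nabla u(t)\|\,\|\rho\|,\qquad \rho(x)=\sum_{j=1}^n|\theta_j(x)|^2,
$$
with $c_2=\sqrt{1/2}$ (the constant from \cite{IKZ22}); nothing changes in the first–order term, only the estimate of $\|\rho\|$ is handled differently.

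The one point that must be checked before applying \eqref{CLT} is that the $\theta_j$, which are orthonormal in $(\cdot,\cdot)_\alpha$, are \emph{suborthonormal in $L_2$}: indeed, from $(\theta_i,\theta_j)+\alpha(\nabla\theta_i,\nabla\theta_j)=\delta_{ij}$ we get, for every $\xi\in\mathbb R^n$,
$$
\sum_{i,j=1}^n\xi_i\xi_j(\theta_i,\theta_j)=|\xi|^2-\alpha\Bigl\|\sum_{j=1}^n\xi_j\nabla\theta_j\Bigr\|^2\le|\xi|^2,
$$
and the $\theta_j$ are divergence free and lie in $\mathbf H^1$, so \eqref{CLT} applies verbatim, giving $\|\rho\|^2\le\mathrm c_{\mathrm{LT}}\sum_j\|\nabla\theta_j\|^2$ with the same $\mathrm c_{\mathrm{LT}}$ whether $\Omega\subset\mathbb R^2$ or $\Omega=\mathbb T^2$. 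Plugging this in and using Young's inequality $c_2\|\nabla u\|\sqrt{\mathrm c_{\mathrm{LT}}\,Y}\le\frac\nu2 Y+\frac{c_2^2\mathrm c_{\mathrm{LT}}}{2\nu}\|\nabla u\|^2$ with $Y:=\sum_j\|\nabla\theta_j\|^2$, together with the lower bound $Y\ge\frac{\lambda_1}{\alpha\lambda_1+1}n$ supplied by \eqref{n}, yields
$$
\sum_{j=1}^n(L_{u(t)}\theta_j,\theta_j)_\alpha\le-\frac{\nu\lambda_1}{2(\alpha\lambda_1+1)}\,n+\frac{c_2^2\,\mathrm c_{\mathrm{LT}}}{2\nu}\,\|\nabla u(t)\|^2
$$
uniformly over all admissible $\{\theta_j\}$. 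Averaging in $t$, invoking \eqref{u} in the form $\limsup_{t\to\infty}\frac1t\int_0^t\|\nabla u\|^2\,d\tau\le\|g\|^2/(\lambda_1\nu^2)$, and recalling $c_2^2=1/2$, one obtains
$$
q(n)\le-\frac{\nu\lambda_1}{2(\alpha\lambda_1+1)}\,n+\frac{\mathrm c_{\mathrm{LT}}\,\|g\|^2}{4\lambda_1\nu^3}.
$$

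The right–hand side is negative as soon as $n>\frac{(\alpha\lambda_1+1)\mathrm c_{\mathrm{LT}}\,\|g\|^2}{2\lambda_1^2\nu^4}=\frac{(\alpha\lambda_1+1)\mathrm c_{\mathrm{LT}}}{2}G^2$ with $G=\|g\|/(\lambda_1\nu^2)$, so by the criterion of \cite{Ch-I} this quantity bounds $\dim_F\mathscr A_\alpha$, which is \eqref{G2}; in particular the bound stays finite as $\alpha\to0^+$. There is no serious obstacle here beyond what is already in Theorem~\ref{Th:1}: the whole content is the substitution of the sharp Lieb--Thirring inequality, and the only thing one genuinely has to be careful about is the observation above that $(\cdot,\cdot)_\alpha$–orthonormality implies $L_2$–suborthonormality, so that the suborthonormal form of \eqref{CLT} is legitimately applicable — and that, after Young's inequality, the trace bound is affine in $n$, so the first $n$ with $q(n)<0$ is exactly the claimed constant.
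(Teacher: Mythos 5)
Your proposal is correct and follows essentially the same route as the paper: starting from \eqref{long}, applying the suborthonormal Lieb--Thirring inequality \eqref{CLT} with $d=2$ to $\|\rho\|$, using Young's inequality and the lower bound \eqref{n}, and averaging via \eqref{u} to get $q(n)\le-\frac{\nu\lambda_1}{2(\alpha\lambda_1+1)}n+\frac{\mathrm c_{\mathrm{LT}}\|g\|^2}{4\lambda_1\nu^3}$. Your explicit verification that $(\cdot,\cdot)_\alpha$-orthonormality implies $L_2$-suborthonormality is a point the paper leaves implicit, but otherwise the arguments coincide.
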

\begin{proof}
As before~\eqref{long} holds. Then
\begin{equation}\label{long2}
\aligned
&\sum_{j=1}^n(L_{u(t)}\theta_j,\theta_j)_\alpha
\le -\nu\sum_{j=1}^n\|\nabla\theta_j\|^2+c_2\|\nabla u(t)\|\|\rho\|\\&
\le
-\nu \sum_{j=1}^n\|\nabla\theta_j\|^2
+c_2\|\nabla u(t)\|\left(\mathrm{c}_{\mathrm {LT}}
\sum_{j=1}^n\|\nabla\theta_i\|^2\right)^{1/2}
\\&
\le -\frac\nu  2\sum_{j=1}^n\|\nabla\theta_j\|^2+
\frac{\mathrm{c}_{\mathrm {LT}}\|\nabla u(t)\|^2}{4\nu}.
\endaligned
\end{equation}
Averaging in $t$, using \eqref{u} and \eqref{n}, we obtain
\begin{equation*}\label{q1}
q(n)\le-\frac\nu {2(\alpha+\lambda_1^{-1})}\cdot n
+\frac{\mathrm{c}_{\mathrm {LT}}\|g\|^2}{4\lambda_1\nu^3},
\end{equation*}
which gives \eqref{G2}.
\end{proof}
The  dimension estimate \eqref{G2} depends quadratically on the
number $G$, although it is well known \cite {BV, T} that for the classical
two-dimensional Navier--Stokes system (i.e., for $\alpha=0$)
$\dim_F\mathscr A\preceq~G$. Namely, this estimate
can conveniently be written  \cite{Lieb90} in terms
of the dimensionless number $\mathcal G$:
\begin{equation}\label{G1}
\dim_F\mathscr A\le\frac{\mathrm{c}_{\mathrm {LT}}^{1/2}}{2\sqrt{2}\pi}\mathcal G<
0.055\cdot \mathcal G,
\quad \mathcal G,
\quad \mathcal G=\frac{\|f\||\Omega|}{\nu^2}.
\end{equation}
The number $\mathcal G$ is obviously easier to calculate,
 $G\le(2\pi)^{-1}\mathcal G$, and finally,
for a torus these two numbers are simply proportional.
Thus, the obtained estimate \eqref{G2} is bounded as  $\alpha\to0$,
but is one order of magnitude worse than \eqref{G1}.
In fact, the estimate of the dimension is valid,
which for small $\alpha$ is linear in $G$ and/or
$\mathcal G$.

\begin{theorem}
Let $\Omega\subset \mathbb R^2$,  $|\Omega|<\infty$ or let
$\Omega=\mathbb T^2$.  Then in the first case for
$$
\alpha\in(0,\alpha_0],\qquad\alpha_0=\frac{|\Omega|}{2\pi}\cdot
\frac1{\mathcal G}
$$
it holds that
\begin{equation}\label{dim2}
\dim_F\mathscr A_\alpha\le\frac{\mathrm{c}_{\mathrm {LT}}^{1/2}}{\sqrt{2}\pi}\mathcal G<
0.109\cdot \mathcal G.
\end{equation}

In the case of a torus for
$$
\alpha\in(0,\alpha_0],\qquad\alpha_0=\frac{|\mathbb T^2|}{\pi^2}\cdot
\frac1{\mathcal G}
$$
it holds that
\begin{equation}\label{dimT2}
\dim_F\mathscr A_\alpha\le\frac1{\pi^2}
\left(\frac{\mathrm{c}_{\mathrm {LT}}}2\right)^{1/2}\mathcal G\le
\frac{3^{1/2}}{8\pi^{3/2}}\,\mathcal G
<0.039\cdot \mathcal G.
\end{equation}
\end{theorem}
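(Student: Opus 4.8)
The plan is to recycle the very $n$-trace inequality that produced \eqref{G2} and to feed into it a \emph{quadratic} (rather than linear) lower bound for $\sum_j\|\nabla\theta_j\|^2$. Concretely, as in \eqref{long2}, for $d=2$ the Lieb--Thirring inequality \eqref{CLT} — applicable because a $(\cdot,\cdot)_\alpha$-orthonormal family is automatically suborthonormal in $L_2$, its $L_2$-Gram matrix being $\mathrm{Id}-\alpha(\text{Gram of the }\nabla\theta_j)\le\mathrm{Id}$ — together with Young's inequality gives, for every admissible $\{\theta_j\}_{j=1}^n$,
$$
\sum_{j=1}^n(L_{u(t)}\theta_j,\theta_j)_\alpha\le-\tfrac\nu2\sum_{j=1}^n\|\nabla\theta_j(t)\|^2+\frac{\mathrm{c}_{\mathrm{LT}}}{4\nu}\|\nabla u(t)\|^2 .
$$
Averaging in $t$ and using the $\alpha$-independent mean-enstrophy bound \eqref{u} yields $q(n)\le-\tfrac\nu2\,\Lambda_\alpha(n)+\mathrm{c}_{\mathrm{LT}}\|g\|^2/(4\lambda_1\nu^3)$, where $\Lambda_\alpha(n):=\inf_{\{\theta_j\}}\sum_j\|\nabla\theta_j\|^2$ over $(\cdot,\cdot)_\alpha$-orthonormal $n$-frames. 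So everything reduces to a lower bound for $\Lambda_\alpha(n)$: the linear bound \eqref{n} used for \eqref{G2} is exactly what forces the wasteful $G^2$, and one wants a bound of order $n^2/|\Omega|$.

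The first step is that lower bound. Since $((1+\alpha A)\theta_i,\theta_j)=\delta_{ij}$, minimising $\sum_j(A\theta_j,\theta_j)$ is a linear eigenvalue problem for $A(1+\alpha A)^{-1}$, whose spectrum is $\{\mu_k/(1+\alpha\mu_k)\}$ with $\mu_k$ the eigenvalues of the Stokes operator; hence
$$
\Lambda_\alpha(n)=\sum_{k=1}^n\frac{\mu_k}{1+\alpha\mu_k}\ \ge\ \frac1{1+\alpha\mu_n}\sum_{k=1}^n\mu_k\ \ge\ \frac{2\pi\,n^2}{(1+\alpha\mu_n)\,|\Omega|},
$$
the last inequality being Berezin--Li--Yau for the $2$D Stokes operator (on $\mathbb T^2$ the $\mu_k$ are the values $|k|^2$ on the dual lattice, so every constant becomes explicit; this, together with the fact that $\lambda_1|\mathbb T^2|=4\pi^2$ is an \emph{equality} whereas one has only $\lambda_1|\Omega|\ge2\pi$ in general, is the mechanism behind the sharper coefficient in \eqref{dimT2}). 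The second step closes the estimate: inserting the bound, $q(n)\le0$ — hence $\dim_F\mathscr A_\alpha\le n$ by the standard volume-contraction criterion, in the refined form giving possibly non-integer bounds — as soon as
$$
\frac{\pi\nu\,n^2}{(1+\alpha\mu_n)\,|\Omega|}\ \ge\ \frac{\mathrm{c}_{\mathrm{LT}}\|g\|^2}{4\lambda_1\nu^3}.
$$
I would then take $n$ equal to the asserted value, e.g.\ $n_0=\mathrm{c}_{\mathrm{LT}}^{1/2}\mathcal G/(\sqrt2\pi)$ in the Dirichlet case, observe that the hypothesis $\alpha\le\alpha_0=|\Omega|/(2\pi\mathcal G)$ together with a crude Weyl-type upper bound $\mu_{n_0}\lesssim n_0/|\Omega|$ forces $1+\alpha\mu_{n_0}\le4$, replace the factor by $4$, and check that the displayed inequality then reduces — using $\lambda_1|\Omega|\ge2\pi$ — to precisely the definition of $n_0$. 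The factor $4$ (through its square root) is exactly why the coefficient in \eqref{dim2} is twice the classical one in \eqref{G1}; letting $\alpha\to0^+$ makes $1+\alpha\mu_n\to1$ and recovers \eqref{G1} verbatim. The torus case is handled identically with the slightly smaller range $\alpha_0=|\mathbb T^2|/(\pi^2\mathcal G)$.

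The hard part is the control of the Voight correction $1+\alpha\mu_n$ at the critical dimension. This requires (i) an explicit Weyl-type upper bound for the $n$-th Stokes eigenvalue, and (ii) a non-circular arrangement of the argument: one must fix $n$ at the target value, verify $1+\alpha\mu_n\le4$ there for \emph{all} $\alpha\le\alpha_0$, and only then deduce $q(n)\le0$ (the assertion ``$q(n)\le0$ for all large $n$'' is simply false once $\alpha\mu_n\gg1$). Everything else — the trace formula, the Lieb--Thirring step, and the mean-enstrophy bound — is $\alpha$-free and identical to the classical Navier--Stokes computation, so no new difficulty arises there.
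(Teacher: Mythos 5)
Your overall architecture coincides with the paper's: the trace inequality \eqref{long2} with Lieb--Thirring and Young, the $\alpha$-free enstrophy average \eqref{u}, the identification of $\inf\sum_j\|\nabla\theta_j\|^2$ with $\sum_{k\le n}\mu_k/(1+\alpha\mu_k)$ via the substitution $\varphi_j=(1+\alpha A)^{1/2}\theta_j$, the Li--Yau input, and the self-consistent choice of $n$ below $\mathcal G$. However, the one step you yourself flag as ``the hard part'' is a genuine gap, and it is exactly the step the paper is engineered to avoid. You bound
$$
\sum_{k=1}^n\frac{\mu_k}{1+\alpha\mu_k}\ \ge\ \frac1{1+\alpha\mu_n}\sum_{k=1}^n\mu_k
$$
and then need $1+\alpha\mu_n\le 4$ at the critical $n$, which you propose to get from a ``crude Weyl-type upper bound $\mu_{n}\lesssim n/|\Omega|$''. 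No such bound with a universal constant exists for a general bounded domain $\Omega\subset\mathbb R^2$: for a thin rectangle $[0,L]\times[0,\varepsilon]$ one already has $\mu_1\sim \pi^2/\varepsilon^2$, so $|\Omega|\mu_1\sim\pi^2L/\varepsilon$ is unbounded over domains of fixed area, and $\alpha_0\mu_1=|\Omega|\mu_1/(2\pi\mathcal G)$ can be made arbitrarily large for fixed $\mathcal G$. Pólya-type upper bounds are not available here in the generality and explicitness you need, so the Dirichlet case of the theorem does not close along your route (on $\mathbb T^2$ you could rescue it using the explicit bound $\lambda_j\le j/2$ from Proposition~\ref{Prop:lambda}, but that is not how the general case can go).

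The fix is one line and is what the paper does: since $t\mapsto t/(1+\alpha t)$ is increasing, substitute the \emph{individual} Li--Yau lower bound \eqref{lbstokes}, $\mu_j\ge 2\pi j/|\Omega|$, directly into each summand to get
$$
\sum_{j=1}^n\frac{\mu_j}{1+\alpha\mu_j}\ \ge\ \frac{2\pi}{|\Omega|}\sum_{j=1}^n\frac{j}{1+\alpha' j},\qquad \alpha'=\frac{2\pi}{|\Omega|}\alpha .
$$
The denominator now involves the \emph{index} $j$, not the eigenvalue, so the hypotheses $j\le n\le\mathcal G$ and $\alpha\le\alpha_0$ give $1+\alpha'j\le 2$ with no spectral upper bound whatsoever, yielding $\sum_j\|\nabla\theta_j\|^2\ge (2\pi/|\Omega|)\,n^2/4$ and hence \eqref{qOmega}. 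With this replacement your argument becomes the paper's; everything else in your proposal (the suborthonormality of the $\theta_j$ in $L_2$ via the Gram-matrix inequality, the averaging, and the final check that $n^*<\mathcal G$ so the constraint $n\le\mathcal G$ is not violated) is correct and matches the paper.
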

\begin{proof}
We only need to estimate $\sum_{j=1}^n\|\nabla\theta_j\|^2$ in \eqref{long2}.
Let us first consider the case $\Omega\subset\mathbb R^2$.
We set
$$
\varphi_j:=(1+\alpha A)^{1/2}\theta_j,
$$
where $A$ is the Stokes operator. Then the system $\{\varphi_j\}_{j=1}^n$
is orthonormal in $L_2$:
$$
(\varphi_i,\varphi_j)=(\theta_i,\theta_j)+
\alpha(\nabla\theta_i,\nabla\theta_j)=\delta_{i\,j}.
$$
Therefore it follows from the variational principle that
$$
\aligned
&\sum_{j=1}^n\|\nabla\theta_j\|^2=
\sum_{j=1}^n\left(\varphi_j,A(1+\alpha A)^{-1}\varphi_j\right)\\
\ge&\sum_{j=1}^n\nu_j=\sum_{j=1}^n\frac{\lambda_j}{1+\alpha\lambda_j}
\ge\frac{2\pi}{|\Omega|}\sum_{j=1}^n\frac j{1+\alpha'j},
\endaligned
$$
where
$$
\alpha':=\frac{2\pi}{|\Omega|}\cdot\alpha,
\quad \alpha'\le \mathcal G^{-1},
$$
and where $\{\nu_j\}_{j=1}^\infty$ are the nondecreasing  eigenvalues of the operator
$A(1+\alpha A)^{-1}$, which are obviously expressed
in terms of the nondecreasing  eigenvalues $\lambda_j$ of the Stokes operator $A$.
We also took into account that the function $t\to t/(1+\alpha t)$ is
monotone increasing and
a (non sharp) lower bound for $\lambda_j$:
\begin{equation}\label{lbstokes}
\lambda_j\ge\frac{2\pi}{|\Omega|}j\,.
\end{equation}
following from the Li--Yau-type bound for the Stokes operator \cite{FA09}
$$
\sum_{j=1}^m\lambda_j\ge\frac{2\pi}{|\Omega|}m^2,
$$
since $m\lambda_m\ge\sum_{j=1}^m\lambda_j$.

Let $n\le \mathcal G$.
Then for all $j=1,\dots,n$ it holds that $1+\alpha'j\le2$,
and therefore
$$
\sum_{j=1}^n\|\nabla\theta_j\|^2\ge\frac{2\pi}{|\Omega|}\cdot\frac{n^2}4,
$$
and instead of \eqref{long2}, taking into account that $\lambda_1\ge2\pi/|\Omega|$,
we obtain
\begin{equation}\label{qOmega}
q(n)\le -\frac{\pi}4\frac\nu{|\Omega|}  n^2
+\frac{\mathrm{c}_{\mathrm {LT}}}{8\pi}\frac{\|g\|^2|\Omega|}{\nu^3}.
\end{equation}
For
$$
n^*=\frac{\mathrm{c}_{\mathrm {LT}}^{1/2}}{\sqrt{2}\pi}\mathcal G
$$
we have $q(n^*)\le0$. Moreover,
$$
\frac{\mathrm{c}_{\mathrm {LT}}^{1/2}}{\sqrt{2}\pi}<1
\quad\Rightarrow\quad n^*<\mathcal G,
$$
and we are on time in getting \eqref{dim2}.
The proof is complete for $\Omega\subset \mathbb R^2$.

In the case of a torus (without loss of generality, we assume that
$\mathbb T^2=[0,2\pi]^2$) it is only necessary to check
the estimate~\eqref{lbstokes}, where
$\lambda_j$ are the eigenvalues of the Laplace operator,
for which even slightly more than~\eqref{lbstokes} is true
(see Proposition~\ref{Prop:lambda}):
\begin{equation}\label{1.15}
\lambda_j\ge \frac j4=j\frac{\pi^2}{|\mathbb T^2|}.
\end{equation}

Proceeding as in the case of the domain $\Omega$, we obtain that for
any $n\le \mathcal G$ and $\alpha\le(|\Omega|/\pi^2)\cdot \mathcal G^{-1}$
we have
$$
\sum_{j=1}^n\|\nabla\theta_j\|^2\ge\frac{\pi^2}{|\mathbb T^2|}\cdot\frac{n^2}4.
$$
Using \eqref{u}, where $\lambda_1=4\pi^2/|\mathbb T^2|$,
instead of~\eqref{qOmega} we get
$$
q(n)\le -\frac{\pi^2}8\frac\nu{|\mathbb T^2|} n^2
+\frac{\mathrm{c}_{\mathrm {LT}}}{4}\frac{\|g\|^2|\mathbb T^2|}{4\pi^2\nu^3},
$$
 which  again on time implies
\eqref{dimT2},
since $(\mathrm{c}_{\mathrm {LT}}/2)^{1/2}/{\pi^2}<1$.
\end{proof}

\begin{remark}
{\rm
Estimate \eqref{G2} does not grow as $\alpha\to0^+$, while
one order of magnitude slower growing estimate \eqref{G1} is uniform
in $\alpha$ on finite intervals of length of order~$G^{-1}$.
 }
\end{remark}

\begin{corollary}\label{Th:T2clnolog}
For the classical Navier--Stokes system on the
2D torus ($\alpha=0$)
it holds
$$
\dim_F\mathscr A \le\frac{\sqrt{\mathrm c_{\mathrm{LT}}}}{2\pi^2}
\,\mathcal G\le \frac{3^{1/2}}{2^{7/2}\pi^{3/2}}\,\mathcal G\le0.028\cdot \mathcal G.
$$
\end{corollary}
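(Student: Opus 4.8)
The plan is to rerun the argument that produced \eqref{dimT2}, now with $\alpha=0$; the gain comes from the fact that when $\alpha=0$ the index $n$ need not be restricted to the range $n\le\mathcal G$, so the factor $1/2$ that was lost in the case $\alpha>0$ by estimating $1/(1+\alpha'j)\ge 1/2$ is no longer present, and this is precisely what improves the constant by $\sqrt 2$.

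First I would record the trace of the linearization. For $\alpha=0$ the norm $\|\cdot\|_\alpha$ is the $L_2$-norm, so the system $\{\theta_j\}_{j=1}^n$ is orthonormal in $L_2$ and divergence free, and the chain \eqref{long}--\eqref{long2} (with $c_2=\sqrt{\frac12}$), after averaging in time and inserting \eqref{u}, yields
\begin{equation*}
q(n)\le-\frac\nu2\sum_{j=1}^n\|\nabla\theta_j\|^2
+\frac{\mathrm{c}_{\mathrm {LT}}}{4\nu}\cdot\frac{\|g\|^2}{\lambda_1\nu^2},
\qquad \lambda_1=\frac{4\pi^2}{|\mathbb T^2|}.
\end{equation*}
Next I would bound the sum of gradients from below by the variational principle: since the $\theta_j$ are $L_2$-orthonormal and divergence free, $\sum_{j=1}^n\|\nabla\theta_j\|^2\ge\sum_{j=1}^n\lambda_j$, and by \eqref{1.15} this is at least $\frac{\pi^2}{|\mathbb T^2|}\sum_{j=1}^n j\ge\frac{\pi^2}{2|\mathbb T^2|}\,n^2$, now with no constraint on $n$. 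Substituting gives
\begin{equation*}
q(n)\le-\frac{\pi^2\nu}{4|\mathbb T^2|}\,n^2
+\frac{\mathrm{c}_{\mathrm {LT}}\,\|g\|^2\,|\mathbb T^2|}{16\pi^2\nu^3}.
\end{equation*}
Solving $q(n^*)=0$ gives $(n^*)^2=\mathrm{c}_{\mathrm {LT}}\|g\|^2|\mathbb T^2|^2/(4\pi^4\nu^4)$, i.e. $n^*=\frac{1}{2\pi^2}\,\mathrm{c}_{\mathrm {LT}}^{1/2}\mathcal G$, and the dimension bound $\dim_F\mathscr A\le n^*$ of \cite{Ch-I} then finishes the argument. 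The displayed numerical constants follow by inserting $\mathrm{c}_{\mathrm {LT}}\le 3\pi/32$ and simplifying $\sqrt{3\pi/32}/(2\pi^2)=3^{1/2}/(2^{7/2}\pi^{3/2})<0.028$.

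I do not expect a genuine obstacle here. The only points worth a remark are: (i) the machinery used — smoothness of the solution semigroup on $\mathscr A$, the trace identity \eqref{long}, and the time-averaged dissipation estimate \eqref{u} — is the classical one for the $2$D Navier--Stokes system, so taking $\alpha=0$ is legitimate even though Section~\ref{sec2} was written for $\alpha>0$; and (ii) one should note that \eqref{1.15} applies to the eigenvalues of the Stokes operator on $\mathbb T^2$, which coincide with those of the scalar Laplacian with zero mean, so the lower bound is available in the vector case as well.
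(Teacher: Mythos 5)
Your proposal is correct and follows essentially the same route as the paper: at $\alpha=0$ the $\{\theta_j\}$ are orthonormal in $L_2$, so $\sum_{j=1}^n\|\nabla\theta_j\|^2\ge\sum_{j=1}^n\lambda_j\ge\frac{\pi^2}{|\mathbb T^2|}\frac{n^2}{2}$ by \eqref{1.15} with no restriction $n\le\mathcal G$, which is exactly the paper's (one-line) argument, and your bookkeeping of the resulting $n^*=\frac{\mathrm{c}_{\mathrm{LT}}^{1/2}}{2\pi^2}\mathcal G$ and the numerical constants is accurate.
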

\begin{proof}
The system $\{\theta_j\}_{j=1}^n$ is orthonormal in
 $L_2$, therefore
$$
\sum_{j=1}^n\|\nabla\theta_j\|^2\ge
\sum_{j=1}^n\lambda_j\ge\frac{\pi^2}{|\mathbb T^2|}\cdot\frac{n^2}2.
$$
\end{proof}

It remains to be seen what happens
with the well-known logarithmically sharp estimate
$$
\dim_F\mathscr A\le cG^{2/3}(\ln G+1)^{1/3}
$$
for the Navier--Stokes system on the two-dimensional torus
\cite{CFT,T} (see also \cite{DoerGib}, \cite{INon})
when going over  to the Navier--Stokes--Voight system. As we shall see,
the situation is almost the same as in the case of a bounded domain,
namely, the following theorem holds.

\begin{theorem}\label{Th:logT2}
Let
$\Omega=\mathbb T^2$.
Then for
$$
\alpha\in[0,\alpha_0],\qquad\alpha_0=\frac{|\Omega|}{\pi^2}\cdot
\frac1{\mathcal G}
$$
it holds that
\begin{equation}\label{dimT2log}
\dim_F\mathscr A_\alpha\le
\min\left[0.039\cdot \mathcal G,\ \ 7.46\cdot
\mathcal G^{2/3}(\ln \mathcal G+5.74)^{1/3}\right].
\end{equation}
\end{theorem}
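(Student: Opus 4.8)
Since the first of the two bounds in \eqref{dimT2log} is exactly the estimate \eqref{dimT2} of the preceding theorem, which already holds for every $\alpha\in[0,\alpha_0]$, it is enough to establish the complementary bound
$$
\dim_F\mathscr A_\alpha\le 7.46\,\mathcal G^{2/3}\bigl(\ln\mathcal G+5.74\bigr)^{1/3}
$$
in the range of $\mathcal G$ in which it is the smaller of the two (in which case it is in particular $\le\mathcal G$); then \eqref{dimT2log} follows by taking the minimum. The plan is to carry over to the suborthonormal framework forced by the $(\cdot,\cdot)_\alpha$-normalisation of the $\theta_j$ the classical logarithmically sharp estimate of Constantin--Foias--Temam \cite{CFT,T} for the periodic Navier--Stokes system; the only new ingredient is the periodic Brezis--Gallouet inequality for suborthonormal systems proved in Section~\ref{sec4}, which plays here the role that the collective Sobolev/Lieb--Thirring inequality plays when $\alpha=0$.

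Concretely, I would argue as follows. As in \eqref{long}--\eqref{long2}, for $u(t)\in\mathscr A_\alpha$ and any system $\{\theta_j\}_{j=1}^n$ orthonormal in $(\cdot,\cdot)_\alpha$,
$$
\sum_{j=1}^n(L_{u(t)}\theta_j,\theta_j)_\alpha\le-\nu\sum_{j=1}^n\|\nabla\theta_j\|^2+c_2\int_\Omega\rho(x)\,|\nabla u(t,x)|\,dx,\qquad \rho=\sum_{j=1}^n|\theta_j|^2,
$$
and, since $(\theta_i,\theta_j)+\alpha(\nabla\theta_i,\nabla\theta_j)=\delta_{ij}$, the family $\{\theta_j\}$ is suborthonormal in $L_2$, so that $\|\rho\|_{L_1}=\sum_{j=1}^n\|\theta_j\|^2\le n$. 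The nonlinear term $\int_\Omega\rho|\nabla u|\,dx$ is then treated as in \cite{CFT,T}: the velocity is split into low and high Fourier modes; the contribution of the high modes is absorbed into $-\nu\sum\|\nabla\theta_j\|^2$ by means of the Lieb--Thirring inequality \eqref{CLT} for $\rho$ together with the higher-order bound on $\mathscr A_\alpha$ furnished, uniformly in $\alpha$, by the enstrophy and palinstrophy estimates already set up in Section~\ref{sec2} (this bound degenerates to the classical $H^2_A$-bound when $\alpha=0$), while the contribution of the low modes is estimated by $\|\rho\|_{L_1}=n$ (or by $\|\rho\|_{L_2}$) times an $L_\infty$-norm of the truncated velocity gradient, the latter controlled through the suborthonormal Brezis--Gallouet inequality of Section~\ref{sec4}; this is the step that generates the logarithm. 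Choosing the truncation level optimally, averaging in time with the help of \eqref{u} (and of the dissipative estimate where a pointwise-in-time bound is needed), invoking the torus eigenvalue bound \eqref{1.15}, and handling the hypothesis $\alpha\le\alpha_0$ exactly as in the proof of the previous theorem — so that $\sum_{j=1}^n\|\nabla\theta_j\|^2\ge \pi^2 n^2/(4|\mathbb T^2|)$ for every $n\le\mathcal G$ — one arrives at a Constantin--Foias--Temam-type inequality for $q(n)$, negative as soon as $n$ exceeds a quantity of the order $\mathcal G^{2/3}(\ln\mathcal G)^{1/3}$.

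It then remains only to solve this implicit inequality with explicit constants: one verifies that the value $n^{*}=7.46\,\mathcal G^{2/3}(\ln\mathcal G+5.74)^{1/3}$ makes $q(n^{*})\le0$, and that $n^{*}\le\mathcal G$ in the relevant range, so that the eigenvalue bound was legitimately used (the ``just on time'' mechanism already employed above); this proves the bound, and hence \eqref{dimT2log}. As $\alpha\to0^{+}$ the argument reduces, with no loss of constants, to the classical estimate \cite{CFT,T} and to \eqref{G1}. I expect the main difficulty to be quantitative rather than structural: tracking simultaneously the numerical constant in the suborthonormal Brezis--Gallouet inequality of Section~\ref{sec4}, the Lieb--Thirring constant $\mathrm{c}_{\mathrm{LT}}$ of \eqref{CLT}, the constants lost in the optimisation over the truncation level and in the time averaging, and doing so tightly enough that the exact values $7.46$ and $5.74$ — and the exponent $2/3$ — come out.
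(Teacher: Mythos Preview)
Your plan has the right destination but the wrong vehicle. The Constantin--Foias--Temam logarithm is \emph{not} produced in the velocity formulation by splitting $u$ into low and high Fourier modes, and the inequality of Section~\ref{sec4} is not an estimate on $\|\nabla u_{\mathrm{low}}\|_{L_\infty}$ for the single function $u$ --- it has nothing to do with $u$. What Proposition~\ref{Prop:rho} actually bounds is $\|\rho\|_{L_\infty}$ for $\rho(x)=\sum_{j}|\nabla\Delta^{-1}\varphi_j(x)|^2$, i.e.\ the collective $L_\infty$ density of the \emph{smoothed} test functions. That extra $\Delta^{-1}$ smoothing is only available if you pass to the vorticity equation: applying $\rot$ to \eqref{DEalpha} gives the scalar equation for $\omega=\rot u$, whose linearisation has the transport term $(\nabla^\perp\Delta^{-1}\varphi)\cdot\nabla\omega$; setting $v_j=\nabla^\perp\Delta^{-1}\varphi_j$, the $n$-trace is bounded by $-\nu\sum\|\nabla\varphi_j\|^2+\|\rho\|_{L_\infty}^{1/2}\,n^{1/2}\,\|\nabla\omega(t)\|$, and now Proposition~\ref{Prop:rho} applies with $\Lambda\sim|\mathbb T^2|\sum\|\nabla\varphi_j\|^2$, producing the logarithm. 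In your velocity scheme $\rho=\sum|\theta_j|^2$ carries no such smoothing, $\|\rho\|_{L_\infty}$ is not logarithmically controlled, and no high/low splitting of $u$ together with Lieb--Thirring will give $G^{2/3}(\ln G)^{1/3}$ --- at best you recover the $G$-type bound you already have. The paper in fact uses \emph{no} Lieb--Thirring here.

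Two further points. The time-averaged enstrophy-dissipation bound you need, $\limsup_{t\to\infty}t^{-1}\int_0^t\|\nabla\omega\|^2\,d\tau\le\|g\|^2/\nu^2$, is not set up in Section~\ref{sec2}; it is derived within the present proof by multiplying the vorticity equation by $(1-\alpha\Delta)\omega$. And once the trace estimate is in hand, the endgame is an implicit inequality of the form $\mathcal N^{3/2}<\mathrm K(\ln\mathcal N+\mathrm k_2/2)^{1/2}\Rightarrow\mathcal N\le\mathrm K^{2/3}(\ln\mathrm K+\mathrm k_2/2)^{1/3}$ after a Jensen-in-time step; the constraint $n\le\mathcal G$ via \eqref{1.15} and $\alpha\le\alpha_0$ enters exactly as you say, but the core mechanism generating the logarithm is the vorticity one above, and your outline misses it.
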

\begin{proof}
We apply the operator $\rot$ to the first equation in~\eqref{DEalpha}
and obtain the scalar equation
\begin{equation}\label{vort}
(1-\alpha \Delta)\partial_t \omega+ u\cdot\nabla \omega -\nu\Delta \omega
=\rot g,
\end{equation}
where
$$
\omega=\rot u,\qquad u=\nabla^{\perp}\Delta^{-1}\omega.
$$

Applying  $(1-\alpha\Delta)^{-1}$ to \eqref{vort}
we obtain the analogue of~\eqref{NSV}
\begin{equation}\label{vort1}
\partial_t \omega+ (1-\alpha \Delta)^{-1}(u\cdot\nabla \omega)
 -\nu\Delta(1-\alpha \Delta)^{-1} \omega
=(1-\alpha \Delta)^{-1}\rot g,
\end{equation}
and taking the scalar product with $(1-\alpha\Delta)\omega$ we obtain
$$
\partial_t(\|\omega\|^2+\alpha\|\nabla\omega\|^2)
+2\nu\|\nabla\omega\|^2=2(\rot g,\omega)=2(g,\rot\omega)\le
2\|g\|\|\nabla\omega\|,
$$
which, as before, gives that
$$
\limsup_{t\to\infty}\frac 1t\int_0^t\|\nabla\omega(\tau)\|^2d\tau\le
\frac{\|g\|^2}{\nu^2}\,.
$$
The linearized equation corresponding to  \eqref{vort1} is
$$
\partial_t\varphi=-(1-\alpha\Delta)^{-1}\left[(u(t)\cdot\nabla\varphi)
+(\nabla^\perp\Delta^{-1}\varphi\cdot\nabla\omega)-\nu\Delta\varphi\right]
=:L_{\omega(t)}\varphi.
$$

Then we consider the  $n$-trace of the operator $L_{\omega(t)}$
with respect to the
system $\{\varphi_j\}_{j=1}^n$ that is  orthonormal in
 $\dot H^1_\alpha$
$$
(\varphi_i,\varphi_j)_\alpha=(\varphi_i,\varphi_j)+
\alpha(\nabla\varphi_i,\nabla\varphi_j)=\delta_{i\,j}.
$$
Setting $v_j:=\nabla^\perp\Delta^{-1}\varphi_j$ we find that
\begin{equation*}\label{long3}
\aligned
&\Tr_nL_{\omega(t)}:=\sum_{j=1}^n(L_{\omega(t)}\varphi_j,\varphi_j)_\alpha
= -\nu \sum_{j=1}^n\|\nabla\varphi_j\|^2
-\sum_{j=1}^n
\left(v_j\cdot\nabla\omega(t),\varphi_j\right)\\&
=
-\nu \sum_{j=1}^n\|\nabla\varphi_j\|^2
+\int_{\mathbb T^2}\biggl(\sum_{j=1}^n
|v_j|^2\biggr)^{1/2}\biggl(\sum_{j=1}^n
\varphi_j^2\biggr)^{1/2}|\nabla\omega(t)|dx\\&\le
-\nu \sum_{j=1}^n\|\nabla\varphi_j\|^2
+\|\rho\|_{L_\infty}^{1/2}\biggl(\sum_{j=1}^n
\|\varphi_j\|^2\biggr)^{1/2}\|\nabla\omega(t)\|
\\&
\le -\nu \sum_{j=1}^n\|\nabla\varphi_j\|^2
+\|\rho\|_{L_\infty}^{1/2}n^{1/2}\|\nabla\omega(t)\|,
\endaligned
\end{equation*}
where
$$
\rho(x)=\sum_{j=1}^n|v_j(x)|^2=\sum_{j=1}^n|\nabla\Delta^{-1}\varphi_j(x)|^2.
$$

We set
$$
|\mathbb T^2|\sum_{j=1}^n\|\nabla\varphi_j\|^2=:T(t)\quad
\text{and also}\quad\mathcal N(t)^2:=\frac1t\int_0^tT(\tau)d\tau.
$$
We again assume that
$n\le \mathcal G$.
It follows from~\eqref{1.15} that
$$
 T,\, \mathcal N^2\ge\frac{\pi^2}4n^2.
$$
We use the estimate for $\|\rho\|_{L_\infty}$ in Proposition~\ref{Prop:rho}
and set $\Lambda:=[T]+1$ in \eqref{rhoinf}:
$$
\|\rho\|_{L_\infty}^{1/2}\le 4\sqrt{2}\pi(\ln8eT)^{1/2}+4
<8\sqrt{2}\pi(\ln8eT+1)^{1/2}.
$$
Collecting the above we obtain
$$
\sum_{j=1}^n(L_{\omega(t)}\varphi_j,\varphi_j)_\alpha\le
\frac\nu{|\mathbb T^2|}
\left(-T+\mathrm k_1T^{1/4}(\ln T+\mathrm k_2)^{1/2}\|\omega(t)\||\mathbb T^2|\nu^{-1}\right),
$$
where
$$
\mathrm k_1=\sqrt{2/\pi}\,8\sqrt{2}\pi=16\sqrt{\pi},\quad \mathrm k_2=3\ln2+2.
$$
Next, averaging in $t$, using Cauchy--Schwartz inequality and
Jensen's inequality (for $T\to\sqrt{T}(\ln T+\mathrm k_2)$),
we obtain
\begin{multline*}
\frac1t\int_0^t\Tr_nL_{\omega(\tau)}d\tau\\\le
\frac{\nu\mathcal N^{1/2}}{|\mathbb T^2|}
\left(-\mathcal N^{3/2}+\sqrt{2}\mathrm k_1(\ln \mathcal N+\mathrm k_2/2)^{1/2}
\mathcal G(1+o_{t\to\infty}(1))\right).
\end{multline*}
Thus, $q(n)<0$, if for large $t$
$$
-\mathcal N^{3/2}+\mathrm K(\ln \mathcal N+\mathrm k_2/2)^{1/2}<0
,\quad \mathrm K=\sqrt{2}\mathrm k_1\,\mathcal G.
$$
This inequality holds if
$$
\mathcal N\ge \mathrm K^{2/3}(\ln\mathrm K+\mathrm k_2/2)^{1/3}.
$$
The last claim is equivalent to the implication
$$
\mathcal N^{3/2}<\mathrm K(\ln \mathcal N+\mathrm k_2/2)^{1/2}
\quad\Rightarrow\quad
\mathcal N\le\mathrm K^{2/3}(\ln \mathrm K+\mathrm k_2/2)^{1/3}.
$$
In fact, taking logarithms of the first inequality with the account that
$\frac12\ln(\ln \mathcal N+\mathrm k_2/2)<\ln\mathcal N$, we find that
$\ln\mathcal N<\ln\mathrm K$, which implies the second inequality.
Recalling that $n<\frac2\pi\mathcal N$, we finally obtain
$$
\aligned
\dim_F\mathscr A_\alpha\le
\frac2\pi(\sqrt{2}\mathrm k_1)^{2/3}\mathcal G^{2/3}
(\ln \mathcal G+\mathrm k_2/2+\ln(\sqrt{2}\mathrm k_1))^{1/3}\\=
\frac{16}{\pi^{2/3}}\mathcal G^{2/3}\biggl(\ln \mathcal G+\frac12\ln \pi+6\ln 2+1\biggr)^{1/3}\\<
7.46\cdot \mathcal G^{2/3}(\ln \mathcal G+5.74)^{1/3}.
\endaligned
$$
We now recall the condition $n\le\mathcal G$. It holds for large
$G$, namely, if
$$
\mathcal G\ge \mathcal G_0=6000,
$$
where $x\thickapprox6000$ is the root of the equation
 $x=7.46x^{2/3}(\ln x+5.24)^{1/3}$.
It remains to take into account estimate~\eqref{dimT2},
for which condition  $n\le\mathcal G$ is satisfied.
\end{proof}

\begin{corollary}\label{Th:logT2Th:logT2cl}
For the classical Navier--Stokes system
on the 2D torus ($\alpha=0$)
the following estimate for the dimension holds
\begin{equation}\label{T20min}
\aligned
&\dim_F\mathscr A\\&\le\min\left[\frac{3^{1/2}}{2^{7/2}\pi^{3/2}}\mathcal G,\
\frac{2^{10/3}}{\pi^{2/3}}\mathcal G^{2/3}
\biggl(\ln \mathcal G+\frac12\ln \pi+\frac{23}4\ln 2+1\biggr)^{1/3}\right]
\\&<
\min\left[0.028\cdot \mathcal G,\ 4.7 \mathcal G^{2/3}
\biggl(\ln \mathcal G+5.56\biggr)^{1/3}\right].
\endaligned
\end{equation}
\end{corollary}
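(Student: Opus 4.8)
The result is Theorem~\ref{Th:logT2} in the limiting case $\alpha=0$, combined with the $\alpha=0$ case of the linear bound already recorded. The plan is therefore as follows. The first entry $\frac{3^{1/2}}{2^{7/2}\pi^{3/2}}\mathcal G$ of the minimum is literally the statement of Corollary~\ref{Th:T2clnolog}, so nothing new is needed there; it remains to produce the logarithmic entry by rerunning the proof of Theorem~\ref{Th:logT2} with $\alpha=0$.

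When $\alpha=0$, equation~\eqref{vort1} is the ordinary scalar vorticity equation on $\mathbb T^2$; the dissipative estimate $\limsup_{t\to\infty}\frac1t\int_0^t\|\nabla\omega(\tau)\|^2\,d\tau\le\|g\|^2/\nu^2$ is $\alpha$-independent and holds verbatim, and the $n$-trace of the linearization $L_{\omega(t)}$ is now taken over a system $\{\varphi_j\}_{j=1}^n$ which is orthonormal in $L_2$ itself (rather than merely in $\dot H^1_\alpha$). This is the one place where the argument changes: by the variational principle and the eigenvalue bound $\lambda_j\ge j/4=j\pi^2/|\mathbb T^2|$ of Proposition~\ref{Prop:lambda} (inequality~\eqref{1.15}),
$$
\sum_{j=1}^n\|\nabla\varphi_j\|^2\ge\sum_{j=1}^n\lambda_j\ge\frac18 n(n+1)\ge\frac{n^2}{8},
\qquad T:=|\mathbb T^2|\sum_{j=1}^n\|\nabla\varphi_j\|^2\ge\frac{\pi^2}{2}\,n^2,
$$
which is twice the lower bound on $T$ available in Theorem~\ref{Th:logT2} (there the factor $1+\alpha\lambda_j$ in the denominator costs a $2$). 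Equivalently, the inequality $n^{1/2}\le(4/\pi^2)^{1/4}T^{1/4}$ used there improves to $n^{1/2}\le(2/\pi^2)^{1/4}T^{1/4}$, so the constant $\mathrm k_1=16\sqrt\pi$ of Theorem~\ref{Th:logT2} is replaced by $2^{-1/4}\cdot16\sqrt\pi$, while $\mathrm k_2=3\ln 2+2$ and the $L_\infty$ estimate for $\rho=\sum_j|\nabla^\perp\Delta^{-1}\varphi_j|^2$ of Proposition~\ref{Prop:rho} (inequality~\eqref{rhoinf}) are insensitive to $\alpha$; likewise $\mathcal N^2:=\frac1t\int_0^t T(\tau)\,d\tau\ge\frac{\pi^2}{2}n^2$, hence $n\le\frac{\sqrt2}{\pi}\mathcal N$.

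From here the proof is word for word that of Theorem~\ref{Th:logT2}: the trace inequality, the substitution of the Brezis--Gallouet bound for $\|\rho\|_{L_\infty}$, the Cauchy--Schwarz step against $\|\omega(t)\|$, Jensen's inequality for the concave map $T\mapsto\sqrt T(\ln T+\mathrm k_2)$, and the elementary ``take logarithms'' implication all carry over unchanged, now with $\mathrm K:=\sqrt2\cdot(2^{-1/4}\cdot16\sqrt\pi)\,\mathcal G=2^{17/4}\sqrt\pi\,\mathcal G$ and with the prefactor $\frac{\sqrt2}{\pi}$ in place of $\frac2\pi$. This yields
$$
\dim_F\mathscr A\le\frac{\sqrt2}{\pi}\,\mathrm K^{2/3}\bigl(\ln\mathrm K+\mathrm k_2/2\bigr)^{1/3}
=\frac{2^{10/3}}{\pi^{2/3}}\,\mathcal G^{2/3}\Bigl(\ln\mathcal G+\tfrac12\ln\pi+\tfrac{23}{4}\ln 2+1\Bigr)^{1/3},
$$
i.e. the second entry of the minimum.

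As for the range of validity: since for $\alpha=0$ the bound $\lambda_j\ge j/4$ holds with no restriction on $n$, there is no analogue of the condition $n\le\mathcal G$ here (and hence no threshold $\mathcal G\ge6000$); in any case, for small $\mathcal G$ the linear entry of Corollary~\ref{Th:T2clnolog} is the smaller of the two. The only genuinely new ingredient relative to Theorem~\ref{Th:logT2} is thus the sharper eigenvalue-sum bound coming from true $L_2$-orthonormality, and the main thing to watch is the (entirely mechanical) bookkeeping of the numerical constants, traced above.
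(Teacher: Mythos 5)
Your proposal is correct and follows exactly the paper's route: the linear entry is Corollary~\ref{Th:T2clnolog}, and the logarithmic entry is obtained by rerunning Theorem~\ref{Th:logT2} with the improved lower bound $T,\mathcal N^2\ge\frac{\pi^2}{2}n^2$ coming from genuine $L_2$-orthonormality, which replaces $\mathrm k_1=16\sqrt{\pi}$ by $\mathrm k_1'=2^{15/4}\sqrt{\pi}$ and yields precisely the constants $2^{10/3}\pi^{-2/3}$ and $\ln\mathcal G+\frac12\ln\pi+\frac{23}{4}\ln 2+1$. Your constant bookkeeping checks out, and your side remark that the restriction $n\le\mathcal G$ (hence the threshold $\mathcal G\ge 6000$) disappears when $\alpha=0$ is a correct observation that the paper leaves implicit.
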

\begin{proof}
We only have to take into account that
\eqref{1.15} implies that
$$
\mathcal N\ge\frac{\pi^2}2n^2
$$
and therefore the constant
$\mathrm k_1$ goes over to
$$
\mathrm k_1'=\frac{\sqrt[4]{2}}{\sqrt{\pi}} 8\sqrt{2}\pi=2^{15/4}\sqrt{\pi}.
$$
\end{proof}

\begin{remark}
{\rm
We observe that the minimum in~\eqref{dimT2log} is attained at
the second argument for $\mathcal G>2.6\cdot 10^8$, and in
\eqref{T20min} for $\mathcal G>1.14\cdot 10^8$.
}
\end{remark}

 \setcounter{equation}{0}
\section{Appendix. Estimate of function $\rho$ } \label{sec4}

For the estimate of  $\rho$ we need the following
fairly rough bounds for the eigenvalues of the
Laplacian on the torus.
\begin{proposition}\label{Prop:lambda}
The eigenvalues $\lambda_j$ of the Laplace(Stokes) operator
on the torus $\mathbb T^2=[0,2\pi]^2$ satisfy
\begin{equation}\label{bounds}
\lambda_j\ge\frac j4,\ j\ge1,\quad \lambda_j\le\frac j2,\ j\ge2.
\end{equation}
\end{proposition}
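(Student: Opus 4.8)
The plan is to reduce everything to counting lattice points. On $\mathbb T^2=[0,2\pi]^2$ the eigenvalues (listed with multiplicity) of the scalar Laplacian on zero-mean functions and of the Stokes operator on zero-mean divergence-free fields are, in both cases, precisely the numbers $|k|^2=k_1^2+k_2^2$ with $k\in\mathbb Z^2\setminus\{0\}$, arranged nondecreasingly — for the Stokes operator because in dimension two the divergence-free condition singles out a one-dimensional subspace for each $k\ne0$, so the multiplicities match the scalar case. Introduce the counting function $N(R):=\#\{k\in\mathbb Z^2:1\le|k|^2\le R\}$; then $N(\lambda_j)\ge j$ by the definition of $\lambda_j$, and conversely $\lambda_j\le R$ whenever $N(R)\ge j$. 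Hence both claimed bounds follow once we establish that $2R\le N(R)\le 4R$ for all $R\ge1$: from $N(R)\le 4R$ we get $j\le N(\lambda_j)\le 4\lambda_j$, i.e.\ $\lambda_j\ge j/4$; and from $N(R)\ge 2R$ applied with $R=j/2\ge1$ (so $j\ge2$) we get $N(j/2)\ge j$, i.e.\ $\lambda_j\le j/2$.

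First I would split the count by the coordinate axes and the sign symmetries, writing $N(R)=4A(R)+4\lfloor\sqrt R\rfloor$, where $A(R):=\#\{(a,b):a,b\ge1,\ a^2+b^2\le R\}$. For the upper bound, attach to each such pair $(a,b)$ the unit square $[a-1,a]\times[b-1,b]$; these have disjoint interiors and lie in the first-quadrant disc $\{x,y\ge0,\ x^2+y^2\le R\}$, because the point of the square farthest from the origin is the corner $(a,b)$. Thus $A(R)\le\pi R/4$ and $N(R)\le\pi R+4\sqrt R$, which is $\le 4R$ once $R$ exceeds an explicit constant (around $22$). For the lower bound I would run the same square/area comparison in the other direction: any point $(x,y)$ with $x,y>0$ lies in the square with corner $(\lceil x\rceil,\lceil y\rceil)$, and $\lceil x\rceil^2+\lceil y\rceil^2<(x+1)^2+(y+1)^2$, so the region $\{x,y>0,\ (x+1)^2+(y+1)^2\le R\}$ is covered by the admissible squares; bounding its area from below by the first-quadrant disc minus two strips of width one gives $A(R)\ge\pi R/4-2\sqrt R$, hence $N(R)\ge\pi R-4\sqrt R-4\ge 2R$ once $R$ exceeds an explicit constant (around $19$).

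It remains to dispose of the bounded leftover range $1\le R<R_0$ by a direct check: since $N$ is constant on each interval $[n,n+1)$, it suffices to verify $N(n)\le 4n$ and $N(n)\ge 2(n+1)$ for the finitely many integers $n\le 21$, which is immediate from the values $r_2(n)=\#\{k:|k|^2=n\}$ (the running sums $N(n)$ for $n=1,2,\dots$ being $4,8,8,12,20,20,20,24,28,36,\dots$, to be compared with $4n$ and with $2(n+1)$). The only real obstacle is that the area-comparison estimates are somewhat lossy, so the range left for hand verification is not negligible (about two dozen integers); one must also note that the binding instance of $N(R)\ge 2R$ on $[n,n+1)$ is the limit $R\to(n+1)^-$, which is why the integer inequality to check is $N(n)\ge 2(n+1)$ rather than $N(n)\ge 2n$. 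Beyond this bookkeeping nothing genuinely difficult is involved.
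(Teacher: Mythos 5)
Your proof is correct and follows essentially the same route as the paper: identify the eigenvalues (with multiplicity) with the values $|k|^2$, $k\in\mathbb Z^2\setminus\{0\}$, reduce both bounds to the two-sided estimate $2R\le N(R)\le 4R$ for the lattice-point counting function, prove that estimate by a Gauss-circle area comparison valid for $R$ beyond an explicit threshold, and finish with a direct check of the finitely many remaining integers. The only cosmetic difference is that you derive the area comparison from scratch via unit squares, whereas the paper quotes the bound $\pi(\sqrt E-\sqrt2/2)^2\le N(E)+1\le\pi(\sqrt E+\sqrt2/2)^2$ from Kr\"atzel's book (obtaining slightly different thresholds, $E\ge29$ and $E\ge15$); the substance is identical.
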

\begin{proof}
It follows from the Weyl asymptotics (in our case from the
asymptotics of the number of the points of the integer lattice  inside a circle) that
$
\lambda_j/j\to1/\pi
$,
and therefore inequalities \eqref{bounds}
can be violated only for finitely many $j$'s.

Let $N(E)$ be the number of the eigenvalues $\lambda_j$
less than or equal  $E$. Then $N(E)+1$ is the
number of the lattice points (including $(0,0)$) inside the circle
of radius $\sqrt{E}$.
The following two-sided estimate holds (see, for example, \cite{Kra}),
that follows from simple geometric considereations
$$
\pi(\sqrt{E}-\sqrt{2}/2)^2\le N(E)+1\le\pi(\sqrt{E}+\sqrt{2}/2)^2.
$$
Therefore
$$
N(E)\le 4E,\ E\ge29,\qquad N(E)>2E, \ E\ge15.
$$
By direct calculation we verify that $N(E)\le 4E$ for $E\ge~1$,
which gives the first inequality in~\eqref{bounds}. In the same way we check that
$N(E)<2E$, $E\ge2$, which gives the second inequality in~\eqref{bounds},
since the first eigenvalue $\lambda_1=1$ is excluded by hypothesis.
\end{proof}

\begin{proposition}\label{Prop:rho}
The following inequality holds for every $\Lambda\in\mathbb N$:
\begin{equation}\label{rhoinf}
\|\rho\|_{L_\infty}^{1/2}\le 4\sqrt{2}\pi(\ln4e\Lambda)^{1/2}+4
\Lambda^{-1/2}\left(|\mathbb T^2|\sum_{j=1}^n\|\nabla\varphi_j\|^2\right)^{1/2}.
\end{equation}
\end{proposition}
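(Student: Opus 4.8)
The inequality \eqref{rhoinf} is a collective Brezis--Gallouet estimate on $\mathbb T^2=[0,2\pi]^2$, and the natural route is a low/high frequency splitting at the level $\Lambda$. The plan is to expand everything in Fourier series and enumerate the nonzero lattice points $k\in\mathbb Z^2$ as $k^{(1)},k^{(2)},\dots$ so that $|k^{(\ell)}|^2=\lambda_\ell$ is nondecreasing; then $v_j=\nabla^\perp\Delta^{-1}\vphi_j$ has Fourier coefficient at $k^{(\ell)}$ of modulus $|\widehat\vphi_j(k^{(\ell)})|\,\lambda_\ell^{-1/2}$. Fixing $\Lambda\in\mathbb N$, write $v_j=v_j^<+v_j^\ge$ keeping in $v_j^<$ the modes with $\ell\le\Lambda$ and in $v_j^\ge$ those with $\ell>\Lambda$; then
$$
\|\rho\|_{L_\infty}^{1/2}=\sup_x\Bigl(\sum_{j=1}^n|v_j(x)|^2\Bigr)^{1/2}\le
\sup_x\Bigl(\sum_{j=1}^n|v_j^<(x)|^2\Bigr)^{1/2}
+\sup_x\Bigl(\sum_{j=1}^n|v_j^\ge(x)|^2\Bigr)^{1/2},
$$
and it remains to bound the two summands by the two terms of \eqref{rhoinf} respectively.

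For the low-frequency part I would use the suborthonormality of $\{\vphi_j\}$ in $L_2$ (which holds for the $\dot H^1_\alpha$-orthonormal system of Theorem~\ref{Th:logT2}, since $\|\sum_i\xi_i\vphi_i\|^2=|\xi|^2-\alpha\|\sum_i\xi_i\nabla\vphi_i\|^2\le|\xi|^2$). Either by the variational principle, reducing to a single function $w$ with $\|w\|_{L_2}\le1$ in place of $\sum_j\xi_j\vphi_j$, or directly by writing the $a$-th component as $v_j^{<,a}(x)=(\vphi_j,h_x^{<,a})$ with $h_x^{<,a}$ the truncated convolution kernel of the $a$-th component of $\nabla^\perp\Delta^{-1}$ evaluated at $x$, the Bessel inequality for suborthonormal systems together with the translation invariance of the $L_2$-norm on the torus give $\sum_j|v_j^<(x)|^2\le\|\nabla^\perp G_{<\Lambda}\|_{L_2}^2$, where $G_{<\Lambda}$ is the truncated Green's function. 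A short Fourier computation evaluates the right-hand side as a fixed multiple of $\sum_{\ell\le\Lambda}\lambda_\ell^{-1}$, and the logarithmic bound then follows from the explicit eigenvalue estimate of Proposition~\ref{Prop:lambda}: $\lambda_\ell\ge\ell/4$ gives $\sum_{\ell\le\Lambda}\lambda_\ell^{-1}\le4(1+\ln\Lambda)\le4\ln(4e\Lambda)$, which is the shape of the first term of \eqref{rhoinf}.

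For the high-frequency part the $L_2$-norm of the untruncated kernel diverges, so instead I would use Cauchy--Schwarz in the Fourier variable together with the $H^1$-regularity of the $\vphi_j$: for each $j$ and $x$,
$$
|v_j^\ge(x)|\le\sum_{\ell>\Lambda}\frac{|\widehat\vphi_j(k^{(\ell)})|}{\lambda_\ell^{1/2}}
\le\Bigl(\sum_{\ell>\Lambda}\lambda_\ell^{-2}\Bigr)^{1/2}
\Bigl(\sum_{\ell>\Lambda}\lambda_\ell\,|\widehat\vphi_j(k^{(\ell)})|^2\Bigr)^{1/2}.
$$
Squaring, summing over $j$, dropping the restriction $\ell>\Lambda$ in the last factor and invoking Parseval, one recognises $\sum_j\sum_\ell\lambda_\ell|\widehat\vphi_j(k^{(\ell)})|^2$ as a fixed multiple of $\sum_j\|\nabla\vphi_j\|^2$, while $\sum_{\ell>\Lambda}\lambda_\ell^{-2}\le16\sum_{\ell>\Lambda}\ell^{-2}\le16/\Lambda$ by Proposition~\ref{Prop:lambda} again; hence $\sum_j|v_j^\ge(x)|^2\le C\Lambda^{-1}\sum_j\|\nabla\vphi_j\|^2$, the shape of the second term. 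Adding the two contributions yields \eqref{rhoinf}.

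The step requiring care is the low-frequency estimate: it is the reduction of $\sum_j|v_j^<(x)|^2$ to $\|\nabla^\perp G_{<\Lambda}\|_{L_2}^2$ via Bessel's inequality in the suborthonormal case that prevents a spurious factor equal to the number $\Lambda$ of retained modes and leaves only the harmless sum $\sum_{\ell\le\Lambda}\lambda_\ell^{-1}\sim\ln\Lambda$; a crude mode-by-mode Cauchy--Schwarz would not suffice here. After that, everything rests on Proposition~\ref{Prop:lambda}, which is exactly what makes both $\sum_{\ell\le\Lambda}\lambda_\ell^{-1}$ and $\sum_{\ell>\Lambda}\lambda_\ell^{-2}$ summable with fully explicit constants, and the remainder is just bookkeeping of the numerical constants and of the powers of $|\mathbb T^2|$ produced by the chosen Fourier normalization.
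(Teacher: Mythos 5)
Your proposal is correct and follows the same skeleton as the paper's proof: a Brezis--Gallouet frequency splitting at the level $\Lambda$, Cauchy--Schwarz in the Fourier variable, and the explicit eigenvalue bounds of Proposition~\ref{Prop:lambda} to control $\sum_{\lambda_j\le\Lambda}\lambda_j^{-1}\lesssim\ln(4e\Lambda)$ and $\sum_{\lambda_j>\Lambda}\lambda_j^{-2}\lesssim\Lambda^{-1}$; the suborthonormality of $\{\varphi_j\}$ in $L_2$ is verified exactly as you do. The one place where the two arguments genuinely diverge is the passage from a bound on a single function to a bound on $\rho=\sum_j|v_j|^2$. The paper keeps a single linear combination $\theta=\sum_j\xi_j\Delta^{-1}\varphi_j$ with $|\xi|=1$ throughout (so suborthonormality gives $\|\Delta\theta\|\le1$ for the low modes and $\|\nabla\Delta\theta\|\le(\sum_j\|\nabla\varphi_j\|^2)^{1/2}$ for the high modes), obtains a pointwise bound $|\sum_j\xi_jv_j(x)|\le R$, and only at the very end recovers $\rho(x)\le 2R^2$ by choosing $\xi$ proportional to $(v_j^a(x))_j$ for each component $a$. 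You instead split $\rho^{1/2}$ by the triangle inequality first and bound the two square sums directly: Bessel's inequality for suborthonormal systems applied to the truncated Green's-function kernel for the low modes, and a per-$j$ Cauchy--Schwarz summed against $\sum_j\|\nabla\varphi_j\|^2$ for the high modes. The two devices are equivalent in strength (the Bessel inequality for a suborthonormal family is precisely the dual formulation of the bound $\|\sum_j\xi_j\varphi_j\|\le|\xi|$ that the paper uses), and both produce constants comfortably within~\eqref{rhoinf}; your version avoids the final factor of $2$ from the componentwise optimization at the cost of running two separate aggregation arguments. Your cutoff is placed at the index $\ell\le\Lambda$ rather than at the eigenvalue $\lambda_j\le\Lambda$ as in the paper, but since $\Lambda$ is a free integer parameter in the application this changes nothing.
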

\begin{proof}
The system $\{\varphi_j\}_{j=1}^n$ is orthonormal in $\dot H^1_\alpha$,
and therefore it is suborthonormal in $L_2$, that is,
for any $\xi\in \mathbb R^n$
$$
\sum_{i,j=1}^n\xi_i\xi_j(\varphi_i,\varphi_j)\le\sum_{j=1}^n\xi_j^2.
$$
Inequality \eqref{rhoinf} (without explicit constants)
is known in the orthonormal case, so we only point out some necessary
refinements.

Thus, for $\theta\in \dot H^3$, using the Fourier series
$\theta(x)=\sum_{k\in\mathbb Z^2_0}e^{ik\cdot x}\theta_k$,
and taking into account that
 $\{|k|^2\}_{k\in\mathbb Z_0^2}=\{\lambda_j\}_{j=1}^\infty$,
we obtain
$$
\aligned
|\nabla\theta(x)|\le\sum_{|k|^2\le\Lambda}|k||\theta_k|+
\sum_{|k|^2>\Lambda}|k||\theta_k|\\=
\sum_{\lambda_j\le\Lambda}\lambda_j^{-1/2}\lambda_j|\theta_j|+
\sum_{\lambda_j>\Lambda}\lambda_j^{-1}\lambda_j^{3/2}|\theta_j|\\\le
\left(\sum_{\lambda_j\le\Lambda}\lambda_j^{-1}\right)^{1/2}
2\pi\|\Delta\theta\|+
\left(\sum_{\lambda_j>\Lambda}\lambda_j^{-2}\right)^{1/2}
2\pi\|\nabla\Delta\theta\|,
\endaligned
$$
where $1<\Lambda\in\mathbb N$ is an arbitrary integer parameter.

We estimate the sums using~\eqref{bounds}
$$
\aligned
&\sum_{\lambda_j\le\Lambda}\lambda_j^{-1}
<4\sum_{j\le4\Lambda}j^{-1}<4\ln4e\Lambda,\\
&\sum_{\lambda_j>\Lambda}\lambda_j^{-2}
<16\sum_{j\ge2\Lambda+1}j^{-2}<\frac8\Lambda.
\endaligned
$$
Let us substitute this into the inequality
$\theta=\sum_{j=1}\xi_j\Delta^{-1}\varphi_j$, $|\xi|=1$.
Since $|\xi|=1$, it follows that
$$
\|\sum_{j=1}^n\xi_j\varphi_j\|\le1\ \text{and}\
\|\sum_{j=1}^n\xi_j\nabla\varphi_j\|\le
\left(\sum_{j=1}^n\|\nabla\varphi_j\|^2\right)^{1/2},
$$
which gives
$$
\aligned
\bigl|\sum_{j=1}^n\xi_jv_j(x)\bigr|\le
4\pi(\ln4e\Lambda)^{1/2}+2\sqrt{2}
\Lambda^{-1/2}\left(|\mathbb T^2|\sum_{j=1}^n\|\nabla\varphi_j\|^2\right)^{1/2}=:R.
\endaligned
$$

We obtain that for $v_j(x)=v_j^1(x)\cdot e_1+v_j^2(x)\cdot e_2$
and every $\xi\in\mathbb R^n$  with $|\xi|=1$
it holds that
$$
\left(\sum_{j=1}^n\xi_jv_j^1(x)\right)^2+
\left(\sum_{j=1}^n\xi_jv_j^2(x)\right)^2\le R^2.
$$
It remains to set here at first $\xi_j=v_j^1(x)/(\sum_{j=1}^n (v_j^1(x))^2)^{1/2}$,
and then $\xi_j=v_j^2(x)/(\sum_{j=1}^n (v_j^2(x))^2)^{1/2}$,
which implies \eqref{rhoinf}:
$$
\rho(x)=\sum_{j=1}^nv_j^1(x)^2+\sum_{j=1}^nv_j^2(x)^2\le
2R^2.
$$\end{proof}

\subsection*{Acknowledgement}
This work was done with the financial support from the Russian
Science Foundation (grant  no. 23-71-30008)  (SZ),
and  within the framework of the state contract
of the Keldysh Institute of Applied Mathematics (AI).

\end{document}